\newtheorem{theo}{\bf Theorem}[section]
\newtheorem{prop}[theo]{\bf Proposition}
\newtheorem{lemma}[theo]{\bf Lemma}
\newtheorem{defi}[theo]{\bf Definition}
\newtheorem{coro}[theo]{\bf Corollary}
\theoremstyle{remark}
\newcommand{\mbf}{\mathbf}
\DeclareMathOperator{\wt}{wt}
\newcommand{\rei}{\varphi_{\begin{tinymatrix}  \cdot & \bullet \\ \bullet & \cdot \\ \end{tinymatrix}}}
\newcommand{\reiia}{\varphi_{\begin{tinymatrix}  \bullet & \bullet \\ \circ & \cdot \\ \end{tinymatrix}}}
\newcommand{\reiib}{\varphi_{\begin{tinymatrix}  \cdot &  \circ \\  \bullet & \bullet  \\ \end{tinymatrix}}}
\newcommand{\reiii} {\varphi_{\begin{tinymatrix}  \cdot & \bullet \\ \circ  & \cdot  \\ \end{tinymatrix}}}
\let\tinymatrix\smallmatrix
\patchcmd{\tinymatrix}{\scriptstyle}{\scriptscriptstyle}{}{}
\patchcmd{\tinymatrix}{\scriptstyle}{\scriptscriptstyle}{}{}
\patchcmd{\tinymatrix}{\vcenter}{\vtop}{}{}
\patchcmd{\tinymatrix}{\bgroup}{\bgroup\scriptsize}{}{}
\begin{document}
\title{TASEP in any Weyl Group}
\author{Erik Aas}
\address{Department of Mathematics, Royal Institute of Technology \\
  SE-100 44 Stockholm, Sweden}
\email{eaas@kth.se}
\date{April 2014}

\maketitle
{\bf Abstract.} We investigate a Markov chain defined by Thomas Lam \cite{Lam}, which generalizes the multi-type TASEP on a ring to any Weyl group. For groups of type C we define an analogue of the multiline queues of Ferrari and Martin (which compute the stationary distribution for the classical TASEP). While our construction does not suffice for finding the stationary distribution, the construction gives the stationary distribution of a certain projection of Lam's chain. Also, our approach is incremental, in the sense that the construction appears to fit into a pattern of 'conjugation matrices', which remains to be fully worked out. Finally, we prove a theorem for the classical TASEP which fits into the picture of viewing TASEP in a permutation-free way.

\section{Introduction}
\label{sec_intro}

Let $W$ be any finite Weyl group.
In \cite{Lam}, Lam defined an interesting Markov chain $\Theta'_W$ on $W$ whose stationary distribution is connected to the study of reduced expressions in the corresponding affine group.
It turns out \cite{AL} that for $W$ of type A, $\Theta'_W$ is the well-known multi-type TASEP on a ring, studied for some time by probabilists.
In particular its stationary distribution has a very elegant description in terms of the \emph{multi-line queues} of Ferrari and Martin \cite{FM}.
One could hope for a similar description of the stationary distribution of $\Theta'_W$ for general $W$.
This appears to be difficult.
Lam also defined a weighted variant $\Theta_W$ whose stationary distribution seems more amenable to analysis (though its connection to reduced words is unclear).

\begin{itemize}
	\item For each $W$, we identify certain projections of $\Theta_W$, which generalize the familiar operation of merging different particle classes in the TASEP. We state this in its most general form in Section 5.
	\item For $W$ of type C, we compute the stationary distribution of one of these projections. We also show how to 'invert' some of the projections. That is, we give a way to simulate one projection $\Theta_1$ in terms of a further projection $\Theta_2$ of it, together with independent randomness. One of these inversions can be seen as an analogue of the multi-line queues of Ferrari and Martin (see Theorem \ref{th_queue}).
	\item We give an equivalent description of the classical (type A) TASEP which seemingly can be formulated purely in terms of geometrical notions (i.e. independently of the permutation representation of the Weyl group of type A). Though it has some interest in itself, we do not manage to generalize it to other Weyl groups.
	\item In the final section we make some concluding remarks and pose several questions.
\end{itemize}

The paper is structured as follows. In Section \ref{sec_C}, we describe Lam's chain in the special case we are most interested in, the type C TASEP, and explain the notion of projections of Markov chains in this context. In Sections \ref{sec_fi} and \ref{sec_se} we prove two theorems partially describing the stationary distribution of $\Theta_W$ for $W$ of type $C$. In Section \ref{sec_general} we generalize some of the results in the preceding sections to arbitrary Weyl groups. In Section \ref{sec_ktasep} we prove a a theorem on parallel update rules for the classical TASEP on a ring, which coincides with $\Theta_W$ for $W$ of type A. Finally in Section \ref{sec_que} we pose some questions which we have no satisfactory answer to.

\section{Type C}
\label{sec_C}

Fix $n \geq 2$.
Throughout this section we fix $W$ to be the Weyl group of type C and rank $n$.
Rather than work through Lam's definition of $\Theta = \Theta_W$ (given in Section \ref{sec_general}), we give another definition which is easily checked to be equivalent (using the well-known permutation presentation of $W$, see \cite{BB}). 

A \emph{state} in $\Theta$ is an assignment of labeled circles to the sites of a cycle of length $2n$ drawn as in Figure \ref{fi_1} (in the case $n = 10$).

\begin{figure}
	\begin{tikzpicture}
	\node[draw,circle] at (1,0){$1$};
	\node at (1,1.5){$\cdot$};
	\node[draw,circle] at (2,1.5){$1$};
	\node at (2,0){$\cdot$};
	\node[draw,circle] at (3,1.5){$2$};
	\node at (3,0){$\cdot$};
	\node[draw,circle] at (4,1.5){$4$};
	\node at (4,0){$\cdot$};
	\node[draw,circle] at (5,0){$2$};
	\node at (5,1.5){$\cdot$};
	\node[draw,circle] at (6,0){$3$};
	\node at (6,1.5){$\cdot$};
	\node[draw,circle] at (7,1.5){$3$};
	\node at (7,0){$\cdot$};
	\node at (8,0){$\cdot$};
	\node at (8,1.5){$\cdot$};
	\node[draw,circle] at (9,0){$5$};
	\node at (9,1.5){$\cdot$};
	\node[draw,circle] at(10,1.5){$1$};
	\node at (10,0){$\cdot$};
	\draw(1.5,1.0)--(9.5,1.0);
 	\draw(1.5,0.5)--(9.5,0.5);
	\draw(1.5,1.0)--(1.5,0.5);
 	\draw(9.5,1.0)--(9.5,0.5);
	\end{tikzpicture}
	\caption{A state.}
	\label{fi_1}
\end{figure}

We refer to the circles in the diagram as \emph{particles}, and the numbers in them as their corresponding \emph{classes}. No two particles are allowed to occupy the same column, though columns are allowed to be empty (such as column number 8 from the left in the example).

There is an exponential bell with rate $1$ at each site. When a bell is activated, the particle at that position (if there is one) either (i) jumps to the next site $s$ counter-clockwise if that position is empty (ii) trades place with the particle at $s$ if that particle has higher class (iii) does nothing if the particle at $s$ has lower class. Moreover, the bell in the upper line in column $i$ and the bell in the lower line in column $i+1$ trigger each other, for each $1 \leq i < n$. This ensures that no two particles occupy the same column at any time. We may thus think of the pair of bells as a single bell with rate $2$. There are then $n$ bells with rate $2$ in the middle and two bells with rate $1$ in the leftmost and rightmost column.
We denote the action of a bell at site $i$ in the lower line and site $i+1$ in the upper line  by $\sigma_i$, for $1 \leq i < n$. The bell at the site furthest down to the right similarly defines $\sigma_n$ and the bell furthest up to the left defines $\sigma_0$.

This defines our Markov chain $\Theta$. Clearly the number $m_i$ of particles of each class $i$ is conserved by the dynamics. We refer to the vector $\mbf{m} = (m_1, m_2, \dots)$ as the {\it type} of the state.

When there is one particle of each class, and each column is occupied by one particle, states are in one-to-one correspondence with signed permutations $w$ of $[n]$ -- there is a particle of class $i$ in the upper row in column $j$ if $w(j) = +i$ and in the lower row if $w(j) = -i$. In the general case, states correspond to left cosets of the group of signed permutations -- we will explain this in more detail later.

For $J \subseteq [n]$ we define a type $\mbf{m}_J$ as follows. Start with a state of type $(1,1,\dots,1)$. If $j \in J - \{n\}$, identify the particle classes $j$ and $j+1$. If $n \in J$, remove all particles of class $n$. Now renumber the remaining particle classes with integers $1, 2, \dots$. The resulting state has a type which we define to be $\mbf{m}_J$. For example, the states in Figures \ref{fi_1} and \ref{fi_2} have type $\mbf{m}_{\{1, 2,4,6,10\}}$ and $\mbf{m}_{\{1,2,4,6\}}$ respectively. Denote the set of all states of type $\mbf{m}_J$ by $\Omega_J$.

Clearly the $\mbf{m}_J$ enumerate all the interesting variations of types of states. Using the identifications above we similarly obtain a projection map $\varphi_i$ from words of type $\mbf{m}_J$ to words of type $\mbf{m}_{J\cup\{i\}}$ for each $J$ such that $i\notin J$.
We denote the restriction of $\Theta$ to states of type $\mbf{m}_J$ by $\Theta_J$. It is easy to see that $\Theta_J$ is aperiodic and irreducible. Thus it has a unique stationary distribution $\pi_J$.

Suppose $(i, J, J')$ satisfies $i \notin J\subseteq [n]$, $J' = J \cup \{i\}$. We should think of $(i, J, J')$ as a covering relation in the boolean poset of all subsets of $[n]$, and call such a triple a {\it link}. Of course any two of $i,J$ and $J'$ determine the third, but the terminology of links turns out to be convenient.

\begin{prop}
	\label{proj1}
	Suppose $(X_1, X_2, \dots)$ is a random walk in $\Theta_J$, and $(i,J,J')$ is a link. Then $(\varphi_i(X_1), \varphi_i(X_2), \dots)$ is a random walk in $\Theta_{J'}$.
\end{prop}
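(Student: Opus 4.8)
The plan is to exhibit $\varphi_i$ as a \emph{lumping} of $\Theta_J$ onto $\Theta_{J'}$, using the standard criterion (Dynkin, or strong lumpability in the sense of Kemeny--Snell) that a function of a Markov chain is again a Markov chain with the pushed-forward dynamics precisely when the generator commutes with the projection. The point that makes this available is that $\Theta_J$ and $\Theta_{J'}$ are both restrictions of the single chain $\Theta$, so they are driven by the \emph{same} bells: the rate $r_k$ of the bell $\sigma_k$ is a feature of the geometry of the $2n$-cycle and does not depend on the type $\mbf{m}_J$. Writing the generator of $\Theta_J$ as
\[
  (L_J f)(x) \;=\; \sum_k r_k\,\bigl[f(\sigma_k x) - f(x)\bigr],
\]
where the sum runs over the bells $\sigma_0,\dots,\sigma_n$ and $f$ ranges over functions on $\Omega_J$, the whole statement will follow once I establish a single intertwining identity.

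The key claim is the pointwise commutation
\[
  \varphi_i(\sigma_k x) \;=\; \sigma_k(\varphi_i x)
  \qquad \text{for every bell } \sigma_k \text{ and every } x \in \Omega_J .
\]
Granting this, for any function $g$ on $\Omega_{J'}$ one computes
\[
  L_J(g\circ\varphi_i)(x)
  = \sum_k r_k\bigl[g(\varphi_i \sigma_k x) - g(\varphi_i x)\bigr]
  = \sum_k r_k\bigl[g(\sigma_k \varphi_i x) - g(\varphi_i x)\bigr]
  = (L_{J'} g)(\varphi_i x),
\]
so that $L_J(g\circ\varphi_i) = (L_{J'}g)\circ\varphi_i$; null moves ($\sigma_k x = x$, or a swap that the projection collapses) contribute $0$ to both sides and cause no trouble. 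This commutation of generators is exactly the lumpability hypothesis, and it yields at once that the image process $(\varphi_i(X_1),\varphi_i(X_2),\dots)$ is Markov with generator $L_{J'}$, i.e.\ a random walk in $\Theta_{J'}$. (Equivalently, one checks directly that the total rate out of $x$ into a fiber $\varphi_i^{-1}(\bar z)$ depends only on $\varphi_i x$ and equals the $\Theta_{J'}$-rate $\varphi_i x \to \bar z$.)

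It remains to verify the intertwining identity, which is a local, case-by-case check governed by what $\sigma_k$ does to the one or two sites it touches and by how $\varphi_i$ acts on the classes sitting there. When $i < n$, the map $\varphi_i$ merges two adjacent classes $c, c+1$ and renumbers; since renumbering preserves the relative order of all other classes, any application of $\sigma_k$ involving a class $\neq c, c+1$ (a jump into an empty site, or a swap/non-swap decided by a comparison against an unmerged class) is transported unchanged. The only genuinely new phenomenon is a swap between a $c$ and a $c+1$: before projection this is a real transposition, but after projection the two particles are identical, so both $\varphi_i(\sigma_k x)$ and $\sigma_k(\varphi_i x)$ equal $\varphi_i x$ and the move is collapsed on both sides. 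When $i = n$, the map $\varphi_i$ deletes the particles of the top class; the consistent bookkeeping is to regard a deleted particle as an empty site, and then the identity to be checked is that ``swapping past a top-class particle'' and ``jumping into the empty site it leaves behind'' agree after deletion, which they do.

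I expect the main obstacle to be precisely this case analysis in its less symmetric corners: the deletion case $i = n$, and the behaviour of the two rate-$1$ boundary bells $\sigma_0$ and $\sigma_n$, whose local geometry differs from that of the interior bells. One must make sure that the conventions ``equal classes do nothing'' (merging) and ``deleted particle $=$ empty site'' (deletion) are compatible with all three of the moves (i)--(iii) simultaneously at these boundary sites; once that is pinned down, the interior bells follow by the same, more symmetric, argument.
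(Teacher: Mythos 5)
Your proof is correct and is essentially the paper's own argument: the paper leaves Proposition \ref{proj1} unproved but establishes the stronger matrix identity $D_{i,J}M_J = M_{J'}D_{i,J}$ (Proposition \ref{proj2}) by exactly your key step, the pointwise commutation $\varphi_i(\sigma_j u) = \sigma_j(\varphi_i u)$ for every bell, ``matching $j = j'$''. Your additional framing via strong lumpability just makes explicit the standard passage from that intertwining to the Markov property of the image process.
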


\begin{defi}
	For a Markov chain $\Theta$ with state space $\Omega$, we define its $\Omega\times\Omega$ {\it transition matrix} $M_\Theta$ by letting 
	\[
		M_\Theta(u, v) = \sum_{\gamma : u \to v}  \textrm{rate}(\gamma),
	\]
	the sum of the rates of all transitions $\gamma$ from state $u$ to state $v$.
\end{defi}

Write $M_J$ for $M_{\Theta_J}$. Thus the stationary distribution $\pi_J$ is an eigenvector of $M_J$, with eigenvalue $2n$.

Suppose $(i, J, J')$ is a link, and define a $\Omega_{J'}\times \Omega_J$ matrix $D = D_{i,J}$ by letting $D(v, u)$ be $1$ if $v = \varphi_i(u)$, and $0$ otherwise. Proposition \ref{proj1} can be strengthened as follows.

\begin{prop}
	\label{proj2}
	Suppose $(i,J,J')$ is a link. Then
	\[
		D_{i,J}M_J = M_{J'}D_{i,J}.
	\]
\end{prop}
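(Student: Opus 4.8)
The plan is to prove the pointwise statement that the projection $\varphi_i$ commutes with every elementary move of the chain, and then to read off the matrix identity by summing over the moves. Each of the clocks $\sigma_0,\dots,\sigma_n$ acts on a state deterministically (it either leaves the state fixed or performs one jump/swap), so it defines a $\{0,1\}$ action matrix $S_k$ on $\Omega_J$ and likewise $S_k'$ on $\Omega_{J'}$, and one has $M_J=\sum_k r_k S_k^{\mathsf T}$, $M_{J'}=\sum_k r_k {S_k'}^{\mathsf T}$, where $r_k\in\{1,2\}$ is the rate of $\sigma_k$ (this is the indexing convention under which $\pi_J$ is the eigenvector of eigenvalue $2n$). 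I would therefore reduce the proposition to the single lemma
\[
\varphi_i(\sigma_k\, u)=\sigma_k\,\varphi_i(u)\qquad\text{for every }u\in\Omega_J\text{ and every }k,
\]
writing $\sigma_k$ for the action on both $\Omega_J$ and $\Omega_{J'}$. Transposing this relation gives $D_{i,J}S_k^{\mathsf T}={S_k'}^{\mathsf T}D_{i,J}$ for each $k$, and summing against the rates $r_k$ yields $D_{i,J}M_J=M_{J'}D_{i,J}$. As a bonus the same commutation, applied along a trajectory driven by a single family of Poisson clocks, reproves Proposition \ref{proj1}, which is why I view this as the natural strengthening.

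The heart of the argument is the case analysis establishing the commutation lemma, organized by the local configuration at the site(s) where $\sigma_k$ fires. For a merging link $i=j<n$, the only nontrivial case is a clock comparing a class-$j$ particle with a class-$(j+1)$ particle: in $\Theta_J$ the rule may swap them, but after identifying the two classes the swapped and unswapped states have the same image under $\varphi_i$, while in $\Theta_{J'}$ the two now-equal particles trigger no move; every clock comparing the merged block with some other class $\ell\notin\{j,j+1\}$ is unaffected because merging $j$ and $j+1$ preserves all other relative orders. For the removal link $i=n$, a class-$n$ particle maps to an empty site, so I must check that class $n$ behaves exactly like a hole: since it is the top class it is overtaken by every particle and overtakes none, so a fine swap of a lower-class particle past a class-$n$ particle projects precisely to the coarse jump of that particle into the vacated empty site, and a fine jump of a class-$n$ particle into an empty site projects to a coarse non-move between two holes.

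I expect the delicate points, and the main obstacle, to be the boundary clocks $\sigma_0$ and $\sigma_n$ together with the column-occupancy constraint enforced by the paired rate-$2$ clocks. Here one must verify that ``folding'' a particle between the upper and lower row at an end of the cycle still commutes with $\varphi_i$ even when the particle being moved is of the deleted class $n$, using that an empty column is empty in both rows so that the image state does not record which row a hole would have occupied. I would handle this by treating the paired clock $\sigma_k$ ($1\le k<n$) as the single transposition of the two cycle-adjacent sites it controls and checking commutation on each occupancy pattern of those two sites (empty/empty, exactly one occupied, both occupied), and similarly for the single-site moves $\sigma_0,\sigma_n$; the finiteness of the list of patterns is what makes the verification routine once the ``top class $=$ hole'' dictionary is fixed.
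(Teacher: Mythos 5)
Your proposal is correct and follows essentially the same route as the paper: the paper's proof also reduces the matrix identity to matching the weighted counts of $j$ with $v=\varphi_i(\sigma_j u)$ against those of $j'$ with $v=\sigma_{j'}(\varphi_i(u))$ by taking $j=j'$, i.e.\ to the commutation $\varphi_i(\sigma_j u)=\sigma_j\varphi_i(u)$ summed over the generators with their rates. The paper leaves that commutation as an unelaborated assertion, whereas you spell out the case analysis (merged classes, class $n$ as a hole, boundary clocks), but the underlying argument is the same.
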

\begin{proof}
	Choose $(v, u) \in \Omega_{J'} \times \Omega_J$.
	The $(v,u)$ entry of the left hand side counts the number of $j\in [0, n]$ (where $0$ and $n$ are counted with weight $1$ and the others with weight $2$) such that $v = \varphi_i(\sigma_j u)$.
	Similarly, the $(v,u)$ entry of the right hand side counts the number of $j'$ such that $v = \sigma_{j'} (\varphi_i(u))$.
	These two counts can be matched to each other simply by taking $j = j'$.
\end{proof}

Proposition \ref{proj2} is stronger than Proposition \ref{proj1} in the sense that for \emph{any} eigenvector $v$ of $M_J$ with eigenvalue $\lambda$, we have $\lambda D_{i,J}v = D_{i,J}M_Jv = M_{J'} (D_{i,J}v)$, so that $D_{i,J}v$ is an eigenvector of $M_{J'}$ if it is non-zero.
Since $\pi_J$ spans a one-dimensional eigenspace of $M_J$ for each $J$, $D_{i,J}\pi_J$ must be a scalar multiple of $\pi_{J'}$. Though we are only interested in the stationary distribution of the $\Theta_J$, considering \emph{all} the eigenvectors will help us to compute the stationary distribution $\pi_J$. \footnote{In \cite{AAMP}, the authors investigate the eigenvalues and eigenvectors for the TASEP on a ring, ie. our chain when $W$ is of type A.} 

Of course, Proposition \ref{proj2} appears to have no practical use -- the chain $\Theta_J$ is more complicated than the chain $\Theta_{J'}$. Therefore, the following fact from linear algebra is quite a revelation.

\begin{prop}
	\label{linalg}
	Suppose $A, B$ are matrices. If there is a matrix $D$ of full rank such that 
	\[
		DA = BD,
	\]
	then there is a matrix $U$ of full rank such that
	\[
		AU = UB.
	\]
\end{prop}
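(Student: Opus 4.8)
The plan is to reduce the statement to the classical fact that every square matrix is similar to its transpose. Suppose $A$ is $p \times p$ and $B$ is $q \times q$; then the hypothesis forces $D$ to be $q \times p$ and the sought matrix $U$ to be $p \times q$. Since full rank means rank $\min(p,q)$ for both a $q\times p$ and a $p \times q$ matrix, and transposition preserves rank, I will freely pass between $D$ and its transpose.

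First I would transpose the relation $DA = BD$, obtaining
\[
  A^{T} D^{T} = D^{T} B^{T},
\]
where $D^{T}$ still has full rank. This already gives an intertwiner of the correct shape (it is $p \times q$), but it relates $A^{T}$ to $B^{T}$ rather than $A$ to $B$, so the remaining task is to convert $A^{T}, B^{T}$ back into $A, B$.

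For this I would invoke that any square matrix is conjugate to its transpose, which is immediate from the rational canonical form since a matrix and its transpose have the same invariant factors. Choosing invertible $P, Q$ with $A^{T} = P A P^{-1}$ and $B^{T} = Q B Q^{-1}$, I substitute these into the displayed identity and set
\[
  U = P^{-1} D^{T} Q.
\]
Left-multiplying by $P^{-1}$ and right-multiplying by $Q$ then yields $A U = U B$; and since $P^{-1}$ and $Q$ are invertible we have $\operatorname{rank} U = \operatorname{rank} D^{T} = \operatorname{rank} D$, so $U$ has full rank, as required.

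The only step that is not routine bookkeeping is the appeal to ``similar to its transpose,'' so that is the conceptual heart of the argument; I expect the sole genuine obstacle to be clerical, namely orienting the two conjugations consistently so that the $A$ and $B$ factors end up adjacent to $U$ and the cancellation goes through cleanly.
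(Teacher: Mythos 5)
Your proof is correct. Note that the paper itself offers no proof of this proposition --- it is simply asserted as ``the following fact from linear algebra'' --- so there is nothing to compare against; your argument fills a gap the author left open. The reduction is sound: transposing $DA=BD$ gives $A^{T}D^{T}=D^{T}B^{T}$ with $D^{T}$ of the right shape ($p\times q$) and of full rank, and the appeal to the rational canonical form (a matrix and its transpose share the same invariant factors, hence are similar over the ground field) legitimately supplies invertible $P,Q$ with $A^{T}=PAP^{-1}$, $B^{T}=QBQ^{-1}$. Substituting and cancelling gives $A\,(P^{-1}D^{T}Q)=(P^{-1}D^{T}Q)\,B$, and since $P^{-1}$ and $Q$ are invertible, $\operatorname{rank}(P^{-1}D^{T}Q)=\operatorname{rank}D^{T}=\operatorname{rank}D=\min(p,q)$, which is exactly what ``full rank'' means for a $p\times q$ matrix. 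The one conceptual ingredient, similarity to the transpose, is exactly what is needed to reverse the direction of the intertwining relation, and it holds over any field, so the argument applies to the rational or real transition matrices the paper actually uses.
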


So the mere existence of the {\it projection matrix} $D_{i,J}$ implies the existence of some {\it conjugation matrix} $U_{i,J}$ such that 
\[
	M_J U_{i,J} = U_{i,J}M_{J'},
\]
which would allow us to compute $\pi_J$ from $\pi_{J'}$ by $\pi_{J} = U_{i,J}\pi_{J'}$! Of course there is no guarantee that there will be a "simple" matrix $U_{i,J}$ (eg. with small positive integer entries). However, in the coming sections we will identify such $U_{i,J}$ for some links $(i,J,J')$. It would be very interesting if these $U_{i,J}$ could be defined at the generality of Proposition \ref{linalg} -- i.e. as a function of $(M_J,M_{J'},D_{i,J})$. The two cases we consider will correspond to adding/removing the particles of largest class, and the case of only two classes of particles.

\section{Adding particles of largest class}
\label{sec_fi}

In this section we construct a conjugation matrix $U = U_{i,J}$ for links $(i, J, J')$ with $i = n$ and $J\subseteq[n-1]$ arbitrary. To describe $U$, take a state $u$ of type $\mbf{m}_{J'}$, and let $\vartheta \in \{+, -\}^n$. We will produce a new state $v = \tau_\vartheta u$, of type $\mbf{m}_J$. Column $j$ in $v$ will be occupied in the upper line if $\vartheta_j = +$ and in the lower line otherwise (in particular, there will be no empty columns in $v$). In the start we refer to these sites as Not Yet Occupied.
Go through the particles in $u$ in any order such that particles with small class come before particles with larger class. The order of particles of same class does not matter. Now, when considering a particle $p$ at a site $s$ in $u$, find the first NYO site in $v$, going \emph{clockwise} from $s$. Put $p$ at this site. Now $s$ has been occupied. When all particles in $u$ have been processed, fill the remaining NYO sites in $v$ with particles of a new largest class.

In Figure \ref{fi_2}, we have applied $\tau_\vartheta$, where $\vartheta=(+,-,-,+,+,-,-,-,-,-)$, to the state in Figure \ref{fi_1}.
We define $U$ by letting its $(u,v)$ entry be the number of $\vartheta\in\{+,-\}^n$ such that $v = \tau_\vartheta u$.

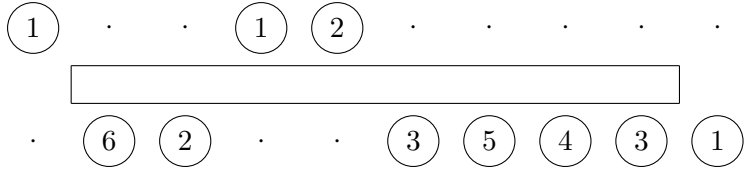
\begin{figure}
	\begin{tikzpicture}
		\node[draw,circle] at (1,1.5){$1$};
		\node at (1,0){$\cdot$};
		\node[draw,circle] at (2,0){$6$};
		\node at (2,1.5){$\cdot$};
		\node[draw,circle] at (3,0){$2$};
		\node at (3,1.5){$\cdot$};
		\node[draw,circle] at (4,1.5){$1$};
		\node at (4,0){$\cdot$};
		\node[draw,circle] at (5,1.5){$2$};
		\node at (5,0){$\cdot$};
		\node[draw,circle] at (6,0){$3$};
		\node at (6,1.5){$\cdot$};
		\node[draw,circle] at (7,0){$5$};
		\node at (7,1.5){$\cdot$};
		\node[draw,circle] at (8,0){$4$};
		\node at (8,1.5){$\cdot$};
		\node[draw,circle] at (9,0){$3$};
		\node at (9,1.5){$\cdot$};
		\node[draw,circle] at(10,0){$1$};
		\node at (10,1.5){$\cdot$};
		\draw(1.5,1.0)--(9.5,1.0);
  	\draw(1.5,0.5)--(9.5,0.5);
		\draw(1.5,1.0)--(1.5,0.5);
  	\draw(9.5,1.0)--(9.5,0.5);
	\end{tikzpicture}
	\caption{The result of applying $\tau_{(+,-,-,+,+,-,-,-,-,-)}$ to the state in Figure \ref{fi_1}.}
	\label{fi_2}
\end{figure}

\begin{theo}
\label{th_queue}
	In the notation above, we have 
	\[
		U M_{J'} = M_J U.
	\]
\end{theo}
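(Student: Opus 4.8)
The plan is to mimic the bell-by-bell structure already used in the proof of Proposition \ref{proj2}. Writing $w_0 = w_n = 1$ and $w_j = 2$ for $0 < j < n$, each of $M_J$ and $M_{J'}$ is the weighted sum $\sum_{j=0}^n w_j \sigma_j$ of the bell-maps, with the identical expression on both state spaces because a bell acts by the same geometric rule irrespective of the type (a bell that does nothing, case (iii), contributes to the diagonal). Since $U = \sum_{\vartheta \in \{+,-\}^n} \tau_\vartheta$, expanding both sides of $U M_{J'} = M_J U$ and collecting the terms attached to each bell shows that it suffices to prove, for every bell index $j$ and every $u \in \Omega_{J'}$, the multiset identity
\[
\{\, \sigma_j(\tau_\vartheta u) : \vartheta \in \{+,-\}^n \,\} = \{\, \tau_\vartheta(\sigma_j u) : \vartheta \in \{+,-\}^n \,\}.
\]
In other words it is enough to show that $U$ commutes with each bell separately, which (exactly as in Proposition \ref{proj2}) makes the weights $w_j$ irrelevant to the combinatorial heart of the matter.

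The per-bell identity I would prove by exhibiting, for each fixed $j$ and $u$, a bijection $\Phi = \Phi_{j,u}$ of $\{+,-\}^n$ satisfying $\sigma_j(\tau_\vartheta u) = \tau_{\Phi(\vartheta)}(\sigma_j u)$ for every $\vartheta$; reindexing the sum by $\vartheta' = \Phi(\vartheta)$ then yields the multiset identity. The design of $\Phi$ rests on the observation that the particles of the new largest class inserted by $\tau_\vartheta$ occupy exactly the sites left vacant by the greedy clockwise insertion, and that flipping one coordinate of $\vartheta$ transports such a filler between the two rows of its column. Accordingly I would split into cases by what the two adjacent sites of the bell hold in $\tau_\vartheta u$: when the bell acts only on fillers the move is absorbed by a coordinate flip of $\vartheta$; when it acts on original particles it should be mirrored by the action of $\sigma_j$ on the corresponding entries of $u$; and the mixed and do-nothing cases interpolate between these. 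Checking that these local prescriptions glue into a single bijection of $\{+,-\}^n$ is the bookkeeping part of the argument.

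The main obstacle is the commutation $\sigma_j(\tau_\vartheta u) = \tau_{\Phi(\vartheta)}(\sigma_j u)$ itself, because $\tau_\vartheta$ is a global construction: the site a particle lands on depends on the cumulative occupancy created by all particles of smaller class already inserted, so a single adjacent move in the input $u$ could in principle cascade and dislodge many particles of the output. The crux is a no-cascade lemma asserting that passing from $u$ to $\sigma_j u$ (a single hop into an empty site, or a single adjacent interchange of two classes), together with the corresponding coordinate change $\vartheta \mapsto \Phi(\vartheta)$, alters the insertion by exactly the one local move that $\sigma_j$ performs on $\tau_\vartheta u$. I would prove this by coupling the greedy insertion of $u$ under $\vartheta$ with that of $\sigma_j u$ under $\Phi(\vartheta)$ and showing the two runs place every particle other than those directly touched by the bell onto identical sites; the monotonicity of the ``first NYO site clockwise'' rule is what confines the discrepancy to a single spot. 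Finally the cyclic boundary, and the two weight-$1$ bells $\sigma_0$ and $\sigma_n$ whose sites sit at the extremities of the two rows, have to be inspected separately, but they should introduce no new behaviour.
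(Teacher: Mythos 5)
Your opening reduction --- that $UM_{J'}=M_JU$ would follow from the per-bell multiset identity $\{\sigma_j(\tau_\vartheta u):\vartheta\}=\{\tau_{\vartheta'}(\sigma_j u):\vartheta'\}$ for each fixed $j$ --- is false, and this sinks the rest of the plan. The obstruction is that $\tau_\vartheta w$ has, by construction, its column-$i$ particle in the upper row exactly when $\vartheta_i=+$; hence the row pattern of $\tau_{\vartheta'}(\sigma_j u)$ is exactly $\vartheta'$ and ranges over all of $\{+,-\}^n$, whereas the row pattern of $\sigma_j(\tau_\vartheta u)$ is $\vartheta$ \emph{as modified by the bell}. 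Take $j=0$: that bell either moves the column-$1$ particle from the upper to the lower row or does nothing, so every state in $\{\sigma_0(\tau_\vartheta u):\vartheta\}$ has its column-$1$ particle in the lower row, while half the states in $\{\tau_{\vartheta'}(\sigma_0 u):\vartheta'\}$ have it in the upper row; the multisets are unequal. The interior bells fail similarly: $\sigma_j$ sends the local row pattern $(-,+)$ at columns $j,j+1$ to $(+,-)$ and fixes the other three patterns, so one of the two mixed patterns is never realized by $\sigma_j\tau_\vartheta u$ but is realized $2^{n-2}$ times by $\tau_{\vartheta'}\sigma_j u$. So $U$ does not commute with the individual bells, only with their weighted sum; your claim that the weights are irrelevant ``exactly as in Proposition \ref{proj2}'' is precisely where the analogy breaks down ($\varphi_i$ intertwines each $\sigma_j$ separately, $\tau_\vartheta$ does not), and for the same reason no bijection $\Phi_{j,u}$ with $\sigma_j(\tau_\vartheta u)=\tau_{\Phi(\vartheta)}(\sigma_j u)$ for \emph{all} $\vartheta$ can exist.

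What the paper's proof actually does is match pairs $(j,\vartheta)\leftrightarrow(j,\vartheta')$ only on part of the index set --- and there your ``no-cascade'' coupling is the right idea for verifying the matched cases --- and then handles the unmatched pairs by a global count: for a fixed sign vector $\vartheta$, all unmatched bells on either side produce one and the same state, and what must be checked is that the \emph{weighted} number of unmatched bells agrees on the two sides. This is exactly where the rates $(1,2,\dots,2,1)$ enter irreplaceably: for fixed $\vartheta$ the number of interior positions carrying the pattern $(-,+)$ and the number carrying $(+,-)$ differ by $0$ or $\pm1$ according to the boundary values $(\vartheta_1,\vartheta_n)$, and the two rate-$1$ boundary bells $\sigma_0,\sigma_n$ are precisely what compensates for that defect. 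Any correct proof must therefore exhibit cancellation \emph{across different bells}; a bell-by-bell argument cannot work.
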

\begin{proof}
	Fix $u \in \Omega_{J'}$. We need to show that the number (counted with the same weights as before) of $(j, \vartheta)$ such that $v = \sigma_j \tau_\vartheta u$ equals the number of $(j', \vartheta')$ such that $v = \tau_{\vartheta'} \sigma_{j'} u$, for each $v \in \Omega_J$.

Some pairs $(j, \vartheta)$ have a natural corresponding pair $(j',\vartheta')$ with $j' = j$, with same weight and such that $\tau_\vartheta \sigma_j u = \sigma_{j'}\tau_{\vartheta'}$, as follows.

\begin{itemize}
	\item if $0 < j < n$ and $\vartheta_j = \vartheta_{j+1}$, let $\vartheta' = \vartheta$.
	\item if $0 < j < n$ and $(\vartheta_{j},\vartheta_{j+1}) = (-, +)$, let $\vartheta' = (\vartheta_1, \dots, \vartheta_{j-1}, \vartheta_{j+1}, \vartheta_{j}, \vartheta_{j+2}, \dots, \vartheta_n)$.
	\item if $j = 0$ and $\vartheta_1 = -$, let $\vartheta'= (+,\vartheta_2,\dots,\vartheta_n)$.
	\item if $j = n$ and $\vartheta_n = +$, let $\vartheta'=(\vartheta_1,\dots,\vartheta_{n-1},-)$.
\end{itemize}

We illustrate the first of these in Figure \ref{fi_3}.
\begin{figure}
	\begin{tikzpicture}
		\node[draw,circle] at (1,0){$?$};
		\node[draw,circle] at (2,0){$?$};
		\node[draw,circle] at (3,1.5){$1$};
		\node[draw,circle] at (4,1.5){$3$};
		\node at (0,0){$\dots$};
		\node at (0,1.5){$\dots$};
		\node at (5,0){$\dots$};
		\node at (5,1.5){$\dots$};
		\node at (2.5,2){$\leftarrow$}; 
		\draw(0.5,1.0)--(4.5,1.0);
  	\draw(0.5,0.5)--(4.5,0.5);

		\node at (6.5,1){$\sigma_{j}$};
		\node at (6.5,0.5){$\longrightarrow$};
	
		\node[draw,circle] at (9,0){$?$};
		\node[draw,circle] at (10,1.5){$1$};
		\node[draw,circle] at (11,0){$?$};
		\node[draw,circle] at (12,1.5){$3$};
		\node at (8,0){$\dots$};
		\node at (8,1.5){$\dots$};
		\node at (13,0){$\dots$};
		\node at (13,1.5){$\dots$};
		\draw(8.5,1.0)--(12.5,1.0);
  	\draw(8.5,0.5)--(12.5,0.5);
	
		\node at (2.7,3.5){$\tau_{\vartheta}$};
		\node at (3,3.5){$\downarrow$};
	
		\node[draw,circle] at (1,6.5){$1$};
		\node[draw,circle] at (2,6.5){$5$};
		\node[draw,circle] at (3,6.5){$3$};
		\node[draw,circle] at (4,5){$2$};
		\node at (0,5){$\dots$};
		\node at (0,6.5){$\dots$};
		\node at (5,5){$\dots$};
		\node at (5,6.5){$\dots$};
		\draw(0.5,6.0)--(4.5,6.0);
  	\draw(0.5,5.5)--(4.5,5.5);
		\node at (1,5.75){$\downarrow$};
		\node at (2,5.75){$\downarrow$};
		\node at (3,5.75){$\uparrow$};
		\node at (4,5.75){$\uparrow$};
		\node at (2.5,7){$\leftarrow$};

		\node at (6.5,6){$\sigma_{j'}$};
		\node at (6.5,5.5){$\longrightarrow$};
	
		\node[draw,circle] at (9,6.5){$1$};
		\node[draw,circle] at (10,6.5){$3$};
		\node[draw,circle] at (11,6.5){$5$};
		\node[draw,circle] at (12,5){$2$};
		\node at (8,5){$\dots$};
		\node at (8,6.5){$\dots$};
		\node at (13,5){$\dots$};
		\node at (13,6.5){$\dots$};
		\draw(8.5,6.0)--(12.5,6.0);
  	\draw(8.5,5.5)--(12.5,5.5);
		\node at (9,5.75){$\downarrow$};
		\node at (10,5.75){$\uparrow$};
		\node at (11,5.75){$\downarrow$};
		\node at (12,5.75){$\uparrow$};
	
		\node at (10.7,3.5){$\tau_{\vartheta'}$};
		\node at   (11,3.5){$\downarrow$};

\end{tikzpicture}
	\caption{In the top left corner we have drawn a state $u$, and indicated the actions of $\sigma_{j'}$ and $\tau_{\vartheta}$.
Below and to the right are $\sigma_{j'}u$ and $\tau_{\vartheta}u$ with $\tau_{\vartheta'}$ and $\sigma_j$ indicated respectively.
The claim is that the state in the bottom right is both $\sigma_j \tau_{\vartheta} u$ and $\tau_{\vartheta'}\sigma_{j'}u$.
We have only indicated one possible result $\tau_{\vartheta}u$; it could be that the position labeled $3$ is replaced by a particle with smaller class.
However, since there are no particles of class $< 1$ we are certain that the position labeled $1$ is labeled so in any circumstance.
It is easy to argue for each particle that it is going to end up in the same place in both $\tau_{\vartheta'}\sigma_{j'}u$ and $\sigma_j\tau_{\vartheta}u$.}
	\label{fi_3}
\end{figure}
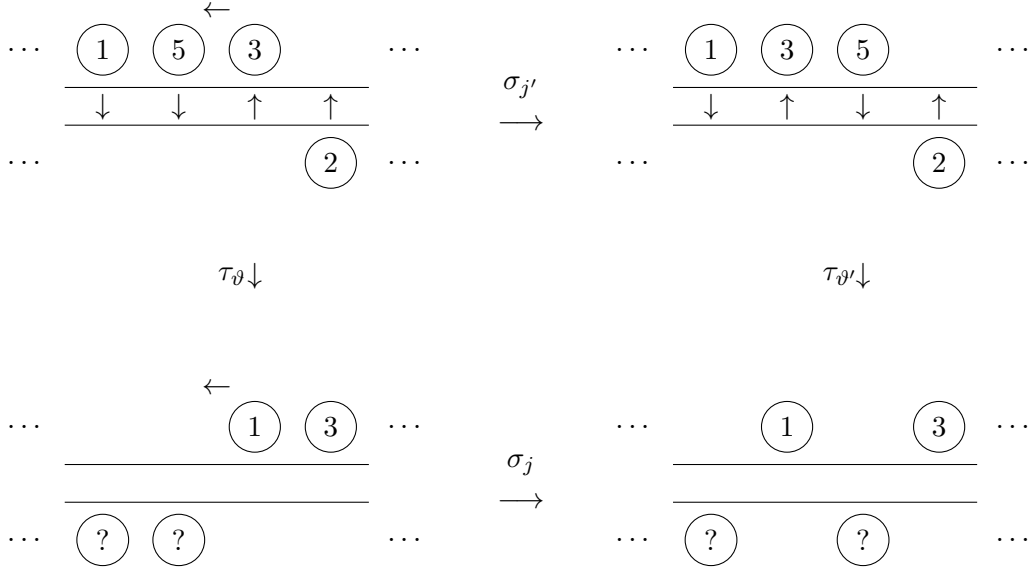

Thus we need to show that the set $S$ of remaining pairs $(j, \vartheta)$, ie. those satisfying
\begin{itemize}
	\item $0 < j < n$, $(\vartheta_{j}, \vartheta_{j+1})=(-,+)$,
	\item $j = 0$, $\vartheta_1 = +$, or
	\item $j = n$, $\vartheta_n = -$,
\end{itemize}

have the same effect as the set $S'$ of remaining pairs $(j', \vartheta')$, ie. those satisfying

\begin{itemize}
	\item $0 < j < n$, $(\vartheta_{j}, \vartheta_{j+1}) = (+,-)$,
	\item $j = 0$, $\vartheta'_1 = -$, or
	\item $j = n$, $\vartheta'_n = +$.
\end{itemize}

It is easy to see that for all $(j, \vartheta) \in S$, the state $\tau_\vartheta \sigma_j u$ is the same, and equal to $\sigma_{j'}\tau_{\vartheta'} u$ for all $(j', \vartheta')\in S'$.
Thus it suffices to show that their (weighted) counts are the same! This is easily done by considering separately the cases $(\vartheta_1, \vartheta_n) = (+,+), (+,-), (-,+), (-,-)$ and similarly for $(\vartheta'_1, \vartheta'_n)$.

\end{proof}

Thus, if $u$ is distributed according to $\pi_{J'}$ and $\vartheta \in \{+,-\}^n$ is chosen indepedently and uniformly at random, then $\tau_\vartheta u$ will be distributed according to $\pi_J$.

\begin{coro}
	The Markov chain on $\Omega_J$ obtained by at each time step applying a random $\tau_\vartheta$ has the same stationary distribution as $\Theta_J$.
\end{coro}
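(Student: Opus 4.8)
The plan is to first pin down what the chain actually is. Since $\tau_\vartheta$ is only defined on states of type $\mbf{m}_{J'}$ while the chain is meant to live on $\Omega_J$, ``applying a random $\tau_\vartheta$'' must mean the composite step $x \mapsto \tau_\vartheta(\varphi_n(x))$: here $i = n$ and $J' = J \cup \{n\}$, so $\varphi_n \colon \Omega_J \to \Omega_{J'}$ first deletes the particles of largest class, and then a uniformly random $\vartheta \in \{+,-\}^n$ lifts the result back into $\Omega_J$. The transition matrix is therefore $P = 2^{-n}\sum_{\vartheta}(\tau_\vartheta \circ \varphi_n)_\ast$, which is stochastic because each $\tau_\vartheta$ is a genuine function and there are exactly $2^n$ sign vectors. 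The Corollary then amounts to the single assertion that $\pi_J$ is $P$-invariant.

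I would establish invariance by composing two facts already in hand. The first is that $(\varphi_n)_\ast \pi_J = \pi_{J'}$. This is immediate from the discussion after Proposition \ref{proj2}: writing $D = D_{n,J}$, one has $(D\pi_J)(v) = \sum_{u \colon \varphi_n(u) = v}\pi_J(u) = \big((\varphi_n)_\ast\pi_J\big)(v)$, and $D\pi_J$ is forced to be a scalar multiple of $\pi_{J'}$; as both are probability vectors the scalar is $1$. (Equivalently, Proposition \ref{proj1} says $(\varphi_n(X_t))_t$ is a stationary walk in $\Theta_{J'}$, whose unique stationary law is $\pi_{J'}$.) The second fact is the probabilistic reading of Theorem \ref{th_queue} stated just above the Corollary: if $u \sim \pi_{J'}$ and $\vartheta$ is uniform and independent of $u$, then $\tau_\vartheta u \sim \pi_J$.

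Chaining the two gives the result in one line. Take $X \sim \pi_J$ and $\vartheta$ uniform and independent of $X$. Then $\varphi_n(X) \sim \pi_{J'}$ by the first fact, and because $\vartheta$ is independent of the deterministic quantity $\varphi_n(X)$ the second fact applies with $u = \varphi_n(X)$, giving $\tau_\vartheta(\varphi_n(X)) \sim \pi_J$. Hence $\pi_J P = \pi_J$, so $\pi_J$ is stationary for the new chain, and it is the very distribution that $\Theta_J$ stabilizes.

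The content here is essentially bookkeeping, so the thing I expect to require care is the interpretation step: one has to insert the projection $\varphi_n$ that turns $\tau_\vartheta$ into a self-map of $\Omega_J$, and then verify that $\vartheta$ is sampled independently of $\varphi_n(X)$ so that the consequence of Theorem \ref{th_queue} can be quoted without any conditioning subtlety. If one reads ``the same stationary distribution'' as also asserting uniqueness, the remaining point — and the one place where a short genuine argument rather than a citation is needed — is to check that $P$ is irreducible and aperiodic on $\Omega_J$; this can be verified directly, after which $\pi_J$ is its unique stationary distribution. I expect that last step to be routine.
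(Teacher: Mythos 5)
Your proof is correct, but it takes a genuinely different route from the paper's. The paper argues by observing that the proof of Theorem~\ref{th_queue} never used the existence of empty columns in $u$, so the identical argument yields the \emph{square} intertwining relation $M_J U = U M_J$ with $U$ now an $\Omega_J\times\Omega_J$ matrix; then $U\pi_J$ is an eigenvector of $M_J$ for the top eigenvalue and hence proportional to $\pi_J$. You instead treat Theorem~\ref{th_queue} as a black box and factor one step of the new chain through $\Omega_{J'}$, composing the two push-forward identities $D_{n,J}\,\pi_J=\pi_{J'}$ (from Proposition~\ref{proj2} plus normalization) and $2^{-n}U\pi_{J'}=\pi_J$. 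This is shorter and avoids re-inspecting the proof of the theorem, but it rests on your interpretive step that applying $\tau_\vartheta$ to $u\in\Omega_J$ is the same as applying it to $\varphi_n(u)$ and refilling; the paper's intended reading is that $\tau_\vartheta$ acts on $u$ directly, routing the largest-class particles like any others. These do coincide --- the largest-class particles are processed last, so the placement of all other particles is unaffected by deleting them, and since they are mutually indistinguishable they end up occupying exactly the leftover sites either way --- but this identification deserves a sentence of justification rather than being folded into the definition. Your closing remark about irreducibility of the $\tau$-chain is a fair point about what ``the same stationary distribution'' should mean; the paper's proof leaves that point equally untouched, so it is not a defect of your argument relative to the original.
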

\begin{proof}
	In the proof of Theorem \ref{th_queue}, we never used the fact that there was some empty column in the state $u$. Thus the same proof shows that $M_J U = U M_J$ where $U(u, v) = 1$ if $v = \tau_\vartheta$ for some $\vartheta$ and $0$ otherwise, for $(u,v)\in\Omega_J\times\Omega_J$. Thus $U$ maps $\pi_J$ onto a constant multiple of itself.
\end{proof}

\section{Two particle classes}
\label{sec_se}

In this section we construct $U = U_{t, J}$ for the case $J = [n]-\{t\}$, $t \neq n$ (note that the case $t = n$ is trivial). In this case $U$ will only have one column, so this is equivalent to describing the stationary state of $\Theta_J$. We will not phrase the result in terms of $U$.

The chain $\Theta_J$ has strong similarities with the 3-type TASEP on a ring, and with the chain studied in \cite{DEHP}. Indeed we will show that the stationary distribution satisfies similar recursion relations to these two chains (only the initial data will be different), and the proof is very similar to that of \cite{DEHP}.

When $J = [n]- \{t\}$, we are considering states with $t$ particles of class $1$. It will be more convenient to write states as words, as follows. A state $u$ is described by a word $w_1, \dots, w_n \in \{-1,0,1\}^n$, where $w_i = 1$ if there is a particle (with the only class $1$) in the upper line in column $i$, $w_i = -1$ if there is one in the lower line, and $w_i = 0$ if both lines are empty in this column. We sometimes write $-x = \bar{x}$ for $x = 1, 0, \bar{1}$. It would be more natural to write $1,2,3$ for $1,0,\bar{1}$, though we will not do this in order to be consistent with previous sections.

In this notation, $\Theta_J$ becomes a chain on words, where any subword $01,\bar{1}1,\bar{1}0$ turns into $10,1\bar{1},0\bar{1}$ respectively at rate $2$, a $\bar{1}$ on the right end turns into $1$ at rate $1$, and a $1$ at the left end turns into a $\bar{1}$ at rate $1$.

Our analysis becomes more transparent if we temporarily generalize $\Theta_J$ so that a subword $01$ turns into $10$ at rate $a$, $\bar{1}1$ into $1\bar{1}$ at rate $b$, $\bar{1}0$ into $0\bar{1}$ at rate $c$, $\bar{1}$ on the right end into $1$ at rate $d$, and $1$ at the left end into $\bar{1}$ at rate $e$, for indeterminates $a,b,c,d,e$.

Furthermore, we will not consider the length $n$ of the words to be fixed anymore; we want to describe the value of stationary distribution at \emph{any} word in $\{-1,0,1\}$ of length at least $2$.

Thus our case of interest is $(a,b,c,d,e)$ proportional to $(2,2,2,1,1)$. To get conventions right, the most convenient choice will be $(a,b,c,d,e) = (1,1,1,\frac{1}{2},\frac{1}{2})$.

\begin{defi}
For each word $u$ (of any length $\geq 2$) we define a Laurent polynomial $[u]$ in $a,b,c,d,e$ as follows.	Let $v,w$ be any words (possibly empty). Then 
								\begin{equation}\label{recrel1}
		[v01w] = [v0w]/a,
	\end{equation}\begin{equation}\label{recrel2}
		[v\bar{1}1w] = ([v\bar{1}w] + [v1w])/b,
	\end{equation}\begin{equation}\label{recrel3}
		[v\bar{1}0w] = [v0w]/c,
	\end{equation}\begin{equation}\label{recrel4}
		[v\bar{1}] = [v] / d, 
	\end{equation}\begin{equation}\label{recrel5}
		[1v] = [v] / e.
	\end{equation}
	Moreover, if $u$ consists of $0$s only, then $[u] = 1$.
\end{defi}

To prove that this defines $[u]$ in a unique way, we need to show that when expanding according to the recursions above we always arrive at the same result $t \cdot [0^s]$, where $t$ is some Laurent polynomial and $s$ is the number of $0$'s in $u$ (which is clearly conserved by the recursions - in fact we could have made any set of choices for $[], [0], [00], [000], \dots$ rather than setting them all equal to $1$). This is easy by induction, using the following induction hypothesis:

\emph{For words of length $\leq n$, taking any definition of $[u]$ for each $u$, all equations between brackets of words of length $\leq n$ are satisfied.}

Here is an interesting consequence of the definition.

\begin{prop}
	If $u = v_1 0 v_2 0 \dots 0 v_r$ where $v_1,\dots,v_r$ are any words in $\{-1,0,1\}$, then $[u] = [v_1 0][0 v_2 0]\dots [0 v_r]$.
\end{prop}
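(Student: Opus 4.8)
The plan is to prove the factorization by induction on the length of $u$, using the defining recursions to peel off structure one symbol at a time. The key observation is that the claimed identity $[u] = [v_1 0][0v_2 0]\cdots[0v_r]$ has a natural ``local'' character: each factor depends only on the symbols between two consecutive $0$'s (together with the flanking $0$'s), so the recursions—which all act locally on short factors $01$, $\bar 1 1$, $\bar 1 0$, or on the two ends—should interact cleanly with the proposed product decomposition.

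First I would set up the induction. The base case is when $u$ consists only of $0$'s (so every $v_i$ is empty), where both sides equal $1$ by definition. For the inductive step, I would look at the leftmost non-$0$ symbol of $u$ (if there is none we are in the base case). Suppose that symbol is a $1$; if it is the very first symbol of $u$, then $u = 1u'$ and recursion (\ref{recrel5}) gives $[u] = [u']/e$, while on the product side the first factor $[v_1 0]$ begins with $1$ and peels off the same $1/e$, reducing to a shorter instance of the identity. If instead the leftmost non-$0$ symbol is a $1$ preceded by a $0$, it sits inside some block as part of a subword $01$, so recursion (\ref{recrel1}) applies and deletes the $1$, again matching the corresponding reduction inside exactly one factor $[0v_i 0]$. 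The remaining cases—leftmost non-$0$ being a $\bar 1$ inside a block (using (\ref{recrel3}) for $\bar 1 0$ or (\ref{recrel2}) for $\bar 1 1$) and a trailing $\bar 1$ at the right end (using (\ref{recrel4}))—are handled symmetrically, and each time the recursion I invoke touches only one of the factors $[v_i 0]$, $[0v_j 0]$, or $[0 v_r]$, leaving the others untouched so that the inductive hypothesis closes the argument.

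The main thing to be careful about—and the step I expect to be the real obstacle—is the branching recursion (\ref{recrel2}) for $\bar 1 1$, which splits a bracket into a \emph{sum} of two brackets rather than rewriting it as a single shorter bracket with a scalar factor. When I apply it to a $\bar 1 1$ occurring inside some block $v_i$, I get $[u]$ as a sum of two terms, each shorter, and I must check that on the product side the same $\bar 1 1$ lives inside the single factor $[0 v_i 0]$, so that recursion (\ref{recrel2}) applied \emph{to that factor alone} produces exactly the matching sum while every other factor is inert. Since the factors multiply and only one of them changes, distributing the sum over the fixed product reproduces the two-term expansion of $[u]$; this is the point where I would verify that no interaction between adjacent blocks is created. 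Because all recursions are confined to a window of width at most two that never straddles a $0$ used as a block separator (the separating $0$'s are never consumed, only copied into the flanking positions of each factor), the locality needed to push the induction through is guaranteed.

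A cleaner alternative, which avoids repeatedly re-examining cases, is to argue that both sides are Laurent polynomials determined by the \emph{same} well-defined reduction system. By the uniqueness already established in the paragraph preceding the proposition, $[u]$ is independent of the order in which recursions are applied. I would therefore fix a canonical reduction strategy that never applies a recursion across a separating $0$—always reducing within the leftmost non-trivial block first—and observe that under this strategy the computation of $[u]$ literally factors as the product of the independent computations of the $[0 v_i 0]$ (with the end blocks $[v_1 0]$ and $[0 v_r]$ absorbing the left/right boundary recursions (\ref{recrel5}) and (\ref{recrel4})). Invoking uniqueness then identifies this factored value with $[u]$, giving the result.
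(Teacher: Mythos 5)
The paper states this proposition without proof (it is offered as ``an interesting consequence of the definition''), so there is no official argument to compare against; judged on its own, your proposal is essentially correct, and your second, ``cleaner alternative'' is the right way to phrase it: every applicable rewrite (\ref{recrel1})--(\ref{recrel5}) has a window of width at most two containing at most one $0$, the separating $0$'s are never deleted by any rule, and each such window therefore lies inside exactly one of the factors $[v_10]$, $[0v_i0]$, $[0v_r]$ (with the end rules (\ref{recrel4}), (\ref{recrel5}) only ever applying to the last and first factor, since the interior factors begin and end with $0$); combined with the already-established independence of the order of reduction, this gives the factorization, and your treatment of the branching rule (\ref{recrel2}) --- one factor splits into a sum while the others are inert, so the product distributes --- is exactly the point that needed checking. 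One small slip in your first (induction-on-leftmost-nonzero-symbol) version: the leftmost non-$0$ letter need not sit in any applicable redex, e.g.\ in $0\bar{1}\bar{1}0$ the leftmost $\bar{1}$ is followed by another $\bar{1}$ and none of (\ref{recrel1})--(\ref{recrel5}) applies at that position, so ``peel off the leftmost non-zero symbol'' does not always advance the induction. This is harmless --- reduce instead at the leftmost \emph{applicable} window (every word that is not all zeros has one: the leftmost $1$, if any, is preceded by $0$, $\bar{1}$, or nothing; otherwise the rightmost $\bar{1}$ is followed by $0$ or nothing) --- but you should either make that repair explicit or simply drop the first argument in favour of the second, which does not have this problem.
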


\begin{theo}
	For any $n$ and $t$, the stationary distribution of $\Theta_J$, where $J = [n] - \{t\}$, evaluated at the state $u$ is proportional to $[u]$.
\end{theo}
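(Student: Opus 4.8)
The plan is to verify directly that the weights $[u]$ form a (possibly unnormalized) stationary vector for the generalized chain with arbitrary indeterminate rates $a,b,c,d,e$; specializing to $(1,1,1,\frac12,\frac12)$ then yields the theorem. Since $\Theta_J$ is irreducible with a unique stationary distribution, and since a one-line induction on the recursions shows $[u]>0$ whenever the rates are positive, it suffices to check that $[\cdot]$ satisfies the balance (master) equation: for every word $u=u_1\cdots u_n$ of length $n\ge 2$,
\[
  \sum_{v\,:\,v\to u}\mathrm{rate}(v\to u)\,[v]\;=\;\Big(\sum_{w\,:\,u\to w}\mathrm{rate}(u\to w)\Big)[u].
\]
I would organize the difference of the two sides (influx minus outflux) by grouping transitions according to the site at which they act: the $n-1$ interior bonds $(i,i+1)$, together with the left and right boundary sites.

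The key idea --- this is the Derrida--Evans--Hakim--Pasquier telescoping, and it is the one step requiring a guess --- is to introduce auxiliary weights obtained by deleting a single letter. Set $\lambda_1=1$, $\lambda_0=0$, $\lambda_{\bar 1}=-1$, and for $u=u_1\cdots u_n$ define $H_i(u)=\lambda_{u_i}\,[u_1\cdots u_{i-1}u_{i+1}\cdots u_n]$. The heart of the argument is a per-bond identity: writing $u=v\alpha\beta w$ around the bond $(i,i+1)$, the net influx-minus-outflux attributable to that bond equals $\lambda_\alpha[v\beta w]-\lambda_\beta[v\alpha w]=H_i(u)-H_{i+1}(u)$. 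This is exactly where recursions \eqref{recrel1}--\eqref{recrel3} enter: for instance, when $(\alpha,\beta)=(1,\bar 1)$ the only transition touching the bond is the influx $\bar 1 1\to 1\bar 1$ at rate $b$, contributing $b[v\bar 1 1 w]$, and \eqref{recrel2} rewrites this as $[v\bar 1 w]+[v1w]=\lambda_1[v\bar 1w]-\lambda_{\bar 1}[v1w]=H_i(u)-H_{i+1}(u)$. The three equal-letter pairs contribute $0$ on both sides, and the six mixed pairs are each governed by exactly one of \eqref{recrel1}--\eqref{recrel3}, so all nine cases work out. (In matrix-ansatz language this is the relation $\mathcal L(X_\alpha X_\beta)=\hat X_\alpha X_\beta-X_\alpha\hat X_\beta$ with $\hat X_1=\mathrm{id}$, $\hat X_0=0$, $\hat X_{\bar 1}=-\mathrm{id}$, but no explicit representation of the operators is needed.)

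Granting the bond identity, summing over $i=1,\dots,n-1$ telescopes to $H_1(u)-H_n(u)$, and it remains only to cancel these two leftover terms against the boundary transitions. At the right end the single transition $\bar 1\to 1$ at rate $d$ has, via \eqref{recrel4}, net contribution $+H_n(u)$ in all three cases $u_n\in\{1,0,\bar 1\}$, cancelling the $-H_n(u)$; symmetrically the left-end transition $1\to\bar 1$ at rate $e$ contributes $-H_1(u)$ via \eqref{recrel5}, cancelling $+H_1(u)$. Adding everything gives $0$, which is precisely the balance equation. Note the all-zeros base case is used only to fix the overall normalization and plays no role in stationarity.

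The main obstacle is locating the auxiliary weights $\lambda_1,\lambda_0,\lambda_{\bar 1}$ (equivalently, the hat operators): once they are in hand each needed identity is an immediate consequence of a single defining recursion, and the fortunate circumstance is that \eqref{recrel1}--\eqref{recrel5} were evidently designed so that each one coincides with exactly one bond- or boundary-cancellation identity. A secondary point demanding care is the bookkeeping of degenerate cases --- deleted words that are short or all-zero, and $n=2$, where the lone interior bond must cancel against both boundaries at once --- but since the recursions consistently extend $[\cdot]$ to words of every length $\ge 0$, no genuine difficulty arises there.
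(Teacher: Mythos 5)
Your proof is correct and follows essentially the same route as the paper: both verify the master equation by splitting it into per-bond and boundary terms, use recursions (1)--(5) to rewrite each term as a signed deleted-letter weight (your $\lambda_{u_i}$ are exactly the paper's coefficients $(a_1,a_0,a_{\bar 1})=(1,0,-1)$), and conclude by telescoping. No substantive difference.
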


\begin{proof}
It suffices to prove that the numbers $[u]$ satisfy the equilibrium equation, which reads
\begin{equation}
	\label{eqeq}
		0 = T_0 + \sum_{0 < i < n} T_i + T_n, 
\end{equation}
	where
\[
	T_0 = \wt(u_1\to \bar{u_1})[u_1\dots u_n] - \wt(\bar{u_1}\to u_1)[\bar{u_1} u_2 \dots u_n],
\]

\[
	T_i = \wt(u_iu_{i+1} \to u_{i+1} u_i)[u_1\dots u_n] - \wt(u_{i+1}u_i\to u_iu_{i+1}) [u_1\dots u_{i-1}u_{i+1}u_iu_{i+2}\dots u_n],
\]
for $0 < i <n$ and
\[
	T_n = \wt(u_n \to \bar{u_n})[u_1\dots u_n] - \wt(\bar{u_n} \to u_n)[u_1\dots u_{n-1} \bar{u_n}].
\]

Here $\wt(01 \to 10) = a$, $\wt(1\bar{1} \to \bar{1}1) = 0$, $\wt(1 \to \bar{1}) = e$ at the left end, et.c.
A case by case analysis (using \ref{recrel1}-\ref{recrel5}) shows that
\[
	T_0 = a_{u_1} [u_2 \dots u_n],
\]
\[
	T_i = a_{u_i} [u_1\dots u_{i-1} u_{i+1} \dots u_n] - a_{u_{i+1}}[u_1\dots u_i u_{i+2} \dots u_n]
\]
	for $0 < i < n$ and  
\[
	T_n = -a_{u_n} [u_1\dots u_{n-1}]
\]

where $(a_1, a_0, a_{\bar{1}}) = (1, 0, -1)$.

This turns the right hand side of (\ref{eqeq}) into a telescoping sum with value $0$ (each term $a_{u_i}[\dots \hat{u_i} \dots]$ occurs twice - once with each sign), so the equilibrium equation is satisfied. 

\end{proof}

For the case we are interested in, $(a,b,c,d,e)=(1,1,1,\frac{1}{2},\frac{1}{2})$, we obtain

\begin{coro}

For each $n, t$, choose $\alpha_{n,t} > 0$ such that the minimum of the numbers $n_u = \alpha_{n,t} \pi_J(u)$ for $u \in \Omega_{[n]-t}$ is $1$. Then

\begin{itemize}
	\item Each $n_u$ is a positive integer.
	\item $n_u = 1$ if and only if $u = \bar{1}^i0^j1^k$ for some $i,j,k$ (satisfying $i+k=t$, $i+j+k = n$).
	\item $n_u \leq 2^t$ for all $u$, with equality if and only if $u = 1^i 0^j \bar{1}^k$ for some $i,j,k$.
	\item For any words $u, v, w$ in $\{-1,0,1\}$, $n_{u0v0w} = n_{u0} \cdot n_{0v0} \cdot n_{0w}$.
\end{itemize}
\end{coro}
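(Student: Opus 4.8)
The plan is to derive each of the four bullet points from the recursion relations \eqref{recrel1}--\eqref{recrel5} specialized to $(a,b,c,d,e)=(1,1,1,\tfrac12,\tfrac12)$, treating $n_u$ as a suitably normalized version of $[u]$. First I would observe that with these parameter values the recursions become $[v01w]=[v0w]$, $[v\bar 10w]=[v0w]$, $[v\bar 11w]=[v\bar 1w]+[v1w]$, $[v\bar 1]=2[v]$, and $[1v]=2[v]$, with $[0^s]=1$ as the base case. The last bullet, the multiplicativity $n_{u0v0w}=n_{u0}\cdot n_{0v0}\cdot n_{0w}$, is not really new: it is exactly the proposition immediately preceding the main theorem (the factorization of $[u]$ over maximal zero-free blocks), combined with the fact that the normalization constant $\alpha_{n,t}$ and the minimum value are themselves multiplicative over independent blocks, so I would reduce it to that proposition and a short check that the normalizations line up.

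Next I would prove that each $[u]$ is a positive integer (after the normalization) by strong induction on word length. The three length-shortening recursions each express $[u]$ as either a single shorter bracket or a \emph{sum} of two shorter brackets, and the two end-rules multiply by $2$; since all coefficients are nonnegative integers and the base case $[0^s]=1$ is a positive integer, positivity and integrality propagate. I would be a little careful that every word can indeed be reduced to the all-zeros base case: any word containing a nonzero symbol has some occurrence of $01$, $\bar 10$, $\bar 11$, a trailing $\bar 1$, or a leading $1$ that a recursion applies to, and one checks that repeated application strictly decreases a natural complexity measure (e.g. the pair consisting of the number of non-zero symbols and the total displacement of the $1$'s and $\bar 1$'s from the canonical sorted position).

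For the extremal characterizations I would identify which words achieve the minimum and maximum. For the minimum $n_u=1$: I claim $\bar 1^i0^j1^k$ is the unique fixed point under which no bracket ever splits into a sum — reducing it, the leading $1$'s and trailing $\bar 1$'s (there are none here by the ordering) together with the interior $01$, $\bar 10$ collapses never trigger the additive rule \eqref{recrel2}, and moreover the end-doubling rules \eqref{recrel4}--\eqref{recrel5} are likewise never invoked because there is no leading $1$ nor trailing $\bar 1$. So $[\bar 1^i0^j1^k]=[0^{i+j+k}]=1$ after normalization, and conversely any other word forces at least one application of a splitting or doubling step, giving value $\ge 2$; the uniqueness comes from showing every non-canonical word has a strictly positive "defect." For the maximum $n_u\le 2^t$ with equality iff $u=1^i0^j\bar 1^k$: here I would argue that $1^i0^j\bar 1^k$ maximizes the use of the doubling rules — peeling off the $i$ leading $1$'s contributes a factor $2^i$ and peeling the $k$ trailing $\bar 1$'s a factor $2^k$, with $i+k=t$, yielding exactly $2^t$; and I would bound an arbitrary word by $2^t$ by tracking that each of the $t$ nonzero symbols can be "cashed in" for at most one factor of $2$ over the course of the full reduction.

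The main obstacle I expect is the extremal uniqueness statements rather than the integrality or the factorization. Proving $n_u\le 2^t$ uniformly, and pinning down exactly when equality holds, requires a clean invariant that bounds the total number of doubling operations by the number of nonzero symbols; the subtlety is that the additive rule \eqref{recrel2} can create \emph{two} descendants, and one must argue that the reductions in each branch cannot collectively exceed $2^t$ — most naturally by exhibiting a weight function $\Phi(u)$ with $\Phi(0^s)=1$, $\Phi\le 2^{\#\{\text{nonzeros}\}}$, that is respected by all five recursions (with the additive rule bounded via $\Phi(v\bar 11w)=\Phi(v\bar 1w)+\Phi(v1w)\le 2\max$), and then verifying the maximizer is forced to be sorted as $1^i0^j\bar 1^k$. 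Once this weight-function bookkeeping is set up, the minimum case follows by the same apparatus with the inequalities reversed, and the four bullets assemble immediately.
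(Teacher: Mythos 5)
The paper offers no proof of this corollary at all --- it is stated as an immediate consequence of the preceding theorem and proposition --- so your plan of deriving the four bullets from the recursions \eqref{recrel1}--\eqref{recrel5} at $(a,b,c,d,e)=(1,1,1,\frac12,\frac12)$ is exactly the intended route, and your treatment of integrality, of the upper bound $n_u\le 2^t$, and of the multiplicativity (which is indeed just the displayed proposition plus the observation that the minimum of $[\cdot]$ is always $1$, so $n_x=[x]$) is sound, if somewhat heavier than necessary: for the bound one does not need a separate weight function $\Phi$, since $[v\bar11w]=[v\bar1w]+[v1w]\le 2^{t-1}+2^{t-1}$ follows directly by induction on the number of nonzero letters.

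The genuine gap is in your argument for the second bullet. Your converse claim --- ``any other word forces at least one application of a splitting or doubling step'' --- is false. The value-preserving rules \eqref{recrel1} and \eqref{recrel3} delete a letter without ever creating a new factor $\bar11$, a new leading $1$, or a new trailing $\bar1$; consequently \emph{every} word avoiding the factor $\bar11$ and having $u_1\ne 1$, $u_n\ne\bar1$ reduces to $0^s$ with value $1$, not only the sorted words $\bar1^i0^j1^k$. Concretely, for $n=4$, $t=2$ one computes $[0110]=[010]=[00]=1$ and $[01\bar10]=[0\bar10]=[00]=1$, and one can check the equilibrium equation at these states directly, yet neither word has the form $\bar1^i0^j1^k$. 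So the ``only if'' direction of the second bullet cannot be proved as you propose --- indeed the bullet as printed appears to be erroneous, the correct characterization of $n_u=1$ being avoidance of the factor $\bar11$ together with $u_1\ne1$ and $u_n\ne\bar1$. Note that the analogous worry does not afflict the third bullet: there, equality $2^{t-1}+2^{t-1}=2^t$ in the additive rule would force both summands to be maximizers $1^i0^j\bar1^k$, which is impossible once the word contains a $0$ (as it must, since $t<n$), so the maximizers really are forced to have all adjacencies among $11,10,1\bar1,00,0\bar1,\bar1\bar1$, i.e.\ to be $1^i0^j\bar1^k$; your proof should make that $0$-dependent step explicit rather than rely on a generic ``each nonzero symbol is cashed in for at most one factor of $2$'' invariant.
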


\section{The general case}
\label{sec_general}

In this section, we explain how much of the analysis carries over to general Weyl groups. The short answer is that the general setup can be formulated for a general Weyl group, but the lack of a useful permutation representation makes it hard to describe the conjugation matrices in an easy way.

Fix a root system $\Phi$ of rank $n$ in $\mathbb{R}^N$ (for some $N$) with a simple system $\alpha_1, \dots, \alpha_n$. Let $\alpha_0$ be the highest root with respect to this choice of simple system, and write $a_0 \alpha_0 = \sum_{i=1}^n a_i \alpha_i$ where the $a_i$ are nonnegative integers with $a_0 = 1$. For $i \in [0, n]$, we let $t_i$ be the reflection in the hyperplane orthogonal to $\alpha_i$. Then $t_1,\dots,t_n$ generate the Weyl group of $\Phi$. We denote its length function by $\ell(\cdot)$. For an element $w \in W$ and $i \neq 0$, we define $\sigma_i(w)$ to be equal to $w$ if $\ell(wt_i) > \ell(w)$ and equal to $wt_i$ otherwise. For $i = 0$, we let $\sigma_i(w) = w$ if $\ell(wt_0) < \ell(w)$ and $wt_i$ otherwise. 

The following is Lam's original definition of $\Theta$.
\begin{defi}
	The Markov chain $\Theta$ has $W$ as state space, and the outgoing transition from any state $w \in W$ are all $w \to \sigma_i(w)$, $i \in [0, n]$. Transitions corresponding to $\sigma_i$ have rate $a_i$.
\end{defi}

For type A, we have $(a_0, \dots, a_n) = (1, \dots, 1)$ and the chain $\Theta'$ in Section \ref{sec_intro} coincides with $\Theta$ in this case. For type C we have $(a_0, a_1, \dots, a_{n-1}, a_n) = (1,2, \dots, 2, 1)$. The chain for type C is equivalent to the chain defined in Section \ref{sec_intro}. 

It appears that the above choice of rates $(a_0, \dots, a_n)$ is essentially the only one that gives a nice stationary distribution for $\Theta$ (eg. such that the probability of any state divided by the probability of a least likely state is an integer, and these integers are not 'too large'). I have no good explanation for this experimental fact; however, see Remark 5 in \cite{Lam}.

We now show how to generalize Propositions \ref{proj1} and \ref{proj2} to the general setting. For $J \subseteq [n]$, let $W_J$ be the subgroup of $W$ generated by $\{t_j : j \in J\}$. Note that we do not allow $0 \in J$.

\begin{prop}
	Suppose $w, w'\in W$ satisfy $W_J w = W_J w'$. Then, for any $i\in[0,n]$, we have $W_J (\sigma_i(w)) = W_J(\sigma_i(w'))$.
\end{prop}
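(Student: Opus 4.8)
The statement to prove asserts that the projection map $w \mapsto W_J w$ descends to a well-defined map on the level of the operators $\sigma_i$; equivalently, that $\sigma_i$ induces a map on left cosets $W_J \backslash W$.

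The plan is to reduce to the case where $w' = t_j w$ for a single generator $t_j$ with $j \in J$, since any two elements in the same left coset $W_J w = W_J w'$ are connected by a chain of such single multiplications (as $W_J$ is generated by the $t_j$, $j \in J$). By induction on the length of such a chain, it suffices to show: if $w' = t_j w$ with $j \in J$, then $W_J \sigma_i(w) = W_J \sigma_i(w')$ for every $i \in [0,n]$.

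So first I would fix $j \in J$, set $w' = t_j w$, and fix $i$. By definition $\sigma_i(w) \in \{w, w t_i\}$ and $\sigma_i(w') \in \{w', w' t_i\} = \{t_j w, t_j w t_i\}$, the choice in each case being dictated by whether right-multiplication by $t_i$ increases or decreases length (with the inverted convention for $i = 0$). The goal is to check that in every case the two outputs lie in the same left coset, i.e. differ by left multiplication by an element of $W_J$. The four a priori possibilities for the pair $(\sigma_i(w), \sigma_i(w'))$ are $(w, t_j w)$, $(w, t_j w t_i)$, $(w t_i, t_j w)$, and $(w t_i, t_j w t_i)$. The first and last are immediate since $\sigma_i(w')$ and $\sigma_i(w)$ then differ exactly by the factor $t_j \in W_J$ on the left. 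The main work is to rule out — or otherwise reconcile — the two "mixed" cases $(w, t_j w t_i)$ and $(w t_i, t_j w)$.

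The key step, and the one I expect to be the main obstacle, is showing those mixed cases cannot occur, which amounts to a statement about how right descents (the sign of $\ell(\cdot t_i) - \ell(\cdot)$) behave under left multiplication by a simple reflection $t_j$. Concretely, $\sigma_i(w) = w$ while $\sigma_i(t_j w) = t_j w t_i$ would require $\ell(w t_i) > \ell(w)$ but $\ell(t_j w t_i) < \ell(t_j w)$ (for $i \neq 0$; the $i=0$ case is analogous with the reversed convention), and the other mixed case is the symmetric statement. I would handle this by the standard fact that whether $t_i$ is a right descent of an element $x$ is detected by the sign of the root $x(\alpha_i)$ — i.e. $\ell(x t_i) > \ell(x)$ iff $x(\alpha_i)$ is a positive root. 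Comparing the roots $w(\alpha_i)$ and $(t_j w)(\alpha_i) = t_j(w(\alpha_i))$, I would use that the simple reflection $t_j$ permutes the positive roots other than $\alpha_j$ and sends only $\alpha_j$ to its negative; hence $t_j$ can flip the sign of $w(\alpha_i)$ only when $w(\alpha_i) = \pm \alpha_j$. In that exceptional situation $w t_i w^{-1} = t_j$, so $w t_i = t_j w$, and then the two mixed outputs actually coincide as cosets anyway. This sign-of-root analysis, together with checking the inverted $i=0$ convention, is the technical heart; once it is in place the four cases all collapse to $W_J \sigma_i(w) = W_J \sigma_i(w')$, completing the induction.
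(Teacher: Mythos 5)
Your proof is correct, but it takes a genuinely different route from the paper's. The paper does not reduce to a single generator: it passes to the minimal length representative of the coset (so that $w < w'$ in Bruhat order with $w' = v w$, $v \in W_J$), concatenates reduced expressions, and invokes the strong exchange condition to exhibit $w t_i w^{-1}$ as a subword $s'_l \cdots s'_k \cdots s'_l$ lying in $W_J$; the whole argument runs through reduced words and Bruhat order. You instead reduce to $w' = t_j w$ for a single $j \in J$ and settle the mixed cases by the root-theoretic descent criterion ($\ell(x t_\alpha) > \ell(x)$ iff $x(\alpha) \in \Phi^+$) together with the fact that the simple reflection $t_j$ permutes $\Phi^+ \setminus \{\alpha_j\}$, forcing $w(\alpha_i) = \pm\alpha_j$ and hence $w t_i = t_j w$ in the only problematic situation. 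Your version is more elementary and self-contained (no Bruhat order, no exchange condition), and it actually proves slightly more: in the mixed case the two outputs are equal as group elements, not merely as cosets. Two small points to nail down in a written version: (i) for $i = 0$ the reflection $t_0 = t_{\alpha_0}$ is not simple, so you need the descent criterion for an arbitrary reflection in a positive root (true, but a stronger statement than the usual simple-reflection version --- cite it explicitly); (ii) the chain reduction should note that all intermediate elements $t_{j_k} \cdots t_{j_m} w$ stay in the coset $W_J w$, so transitivity applies. Both approaches ultimately rest on the same fact --- that in the mixed case $w t_i w^{-1} \in W_J$ --- but yours identifies it as a single simple generator while the paper's identifies it as a subword of a reduced expression.
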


\begin{proof}
This is clear if $\ell(wt_i) > \ell(w)$ and $\ell(w't_i) > \ell(w')$, or if $\ell(wt_i) < \ell(w)$ and $\ell(w't_i) < \ell(w')$.

We can thus focus on the case when $\ell(wt_i) > \ell(w)$ and $\ell(w't_i) < \ell(w')$, the fourth case then follows by symmetry in $w, w'$.
It suffices to prove that $wt_iw^{-1} \in W_J$ since this will prove that $W_J \sigma_iw' = W_J w't_i = W_J w t_i = W_J w = W_J \sigma_i w$ if $i \neq 0$ and	$W_J \sigma_iw' = W_J w' = W_J w = W_J wt_i = W_J \sigma_i w$ if $i = 0$.
Since each $W_J$ has a minimal element (Corollary 2.4.5 in \cite{BB}), we can reduce to the case when $w < w'$ in Bruhat order (the case $w' < w$ is similar).

Then (see the mirrored version of Proposition 2.4.4 in \cite{BB}), $w$ has a reduced expression
\[
	s_1\dots s_r,
\]
and	$w'$ has a reduced expression
\[
	s'_1 \dots s'_l
\]
where $s'_i \in W_J$.

The conditions imply (Corollary 1.4.4 in \cite{BB}) that $t_i=s_r\dots s_1 s'_l \dots s'_k \dots s'_l s_1 \dots s_r$ for some $k$ and thus $ut_iu^{-1}  = s'_l\dots s'_k\dots s'_l \in W_J$.
\end{proof}
Thus all the subgroup inclusion $W_{J'} \leq W_J$ (with $J' \subseteq J$) induce projections $D_{J,J'}:\Theta_J \to \Theta_{J'}$ in the same way as in Section \ref{sec_C}.
Explicitly, let $D_{i, J}(W_Ju, W_{J'}v) = 1$ if $W_{J'}u = W_{J'}v$ and $0$ otherwise. Then $D_{i,J}M_J = M_{J'}D_{i,J}$.
We should thus be able to expect there to be corresponding conjugation matrices $U_{J,J'}$. We have not succeeded in finding a general description of these (indeed, not even for groups of type C), though there appears to be 'surprisingly simple' such $U$ in several cases.

\section{The $k$-TASEP}
\label{sec_ktasep}

In this section, we prove a theorem about the TASEP on a ring (i.e. the type A case of the chains considered above), which can be formulated independently of the permutation representation. However the most obvious generalizations of the theorem do not appear to hold for a general Weyl group. 

We now define the classical multi-type TASEP on a ring. We will include indeteminates $x_1, x_2, \dots$ in the definition as in \cite{LW}. This is more general than the chain $\Theta_W$ considered earlier, but reduces to it on letting $x_1 = x_2 = \dots = 1$.

The state space is the set of words of length $n$ in the alphabet $\{1, 2, \dots\}$. We consider the words to be cyclic, so that the letter to the left of $w_1$ is $w_0 = w_n$ (all indices are taken modulo $n$). For $i\in[n]$ we define $\sigma_i(w)$ as the result of sorting the two letters $w_i$ and $w_{i-1}$. Thus if the letters already satisfy $w_{i-1} \leq w_i$, nothing happens, and otherwise they swap positions.

The outgoing transitions from a general state (word) $u$ are all $u \to \sigma_i u$, $i\in[n]$, where transitions corresponding to $\sigma_i$ have rate $x_{u_i}$ (note that this definition is \emph{not} symmetric in $(u_{i-1},u_i)$).

For a proper subset $S \subsetneq [n]$ we would like to define $\sigma_S$ as the composition of all $\sigma_j$, $j\in S$. To do this we need to specify in which order non-commuting pairs of $\sigma_j$'s should be taken. Note that $\sigma_j$ and $\sigma_k$ commute whenever $|k-j|>1$ modulo $n$. We use the convention that \emph{$\sigma_{j-1}$ is taken before $\sigma_j$}.
Thus, for example, if $n = 7$, $\sigma_{ \{1,2,4,5,7\}}$ equals $\sigma_2\sigma_1\sigma_7\sigma_5\sigma_4$.
The transition $u \to \sigma_S u$ is given rate $\prod_{i\in S} x_{u_i}$.

\begin{theo}
	\label{ktasep}
	Fix $k \in (0,n)$. The Markov chain $\Theta_k$ on words whose outgoing transitions from a word $u$ are all $u \to \sigma_Su$, $S\subseteq[n]$, $|S|=k$, has the same stationary distribution for all $k$.
\end{theo}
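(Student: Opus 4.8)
The plan is to fix a content class (a multiset of letters, which every $\Theta_k$ preserves) and show that the unique stationary distribution of $\Theta_1$ on that class is also stationary for every $\Theta_k$; since each $\Theta_k$ is irreducible on the class, its stationary distribution is unique, so this identification of stationary vectors proves the theorem. The reduction I would use is purely linear-algebraic. Suppose on the fixed content class the generators satisfy $M_1 M_k = M_k M_1$, and let $\pi$ span the one-dimensional left kernel of $M_1$ there. Then $(\pi M_k)M_1 = \pi(M_kM_1) = \pi(M_1M_k) = (\pi M_1)M_k = 0$, so $\pi M_k$ again lies in the left kernel of $M_1$; hence $\pi M_k = c\pi$ for a scalar $c$. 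Since $M_k$ has zero row sums, the entries of $\pi M_k$ sum to $0$, while those of $\pi$ do not, forcing $c = 0$. Thus $\pi M_k = 0$, and by uniqueness $\pi$ is the stationary distribution of $\Theta_k$ as well.

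Everything therefore reduces to the commutation identity $M_1 M_k = M_k M_1$, and more generally $M_j M_k = M_k M_j$. I would prove this in the bijective spirit of Proposition \ref{proj2} and Theorem \ref{th_queue}: the $(u,v)$ entry of $M_1 M_k$ is a weighted count of two-step transitions $u \xrightarrow{\{i\}} w \xrightarrow{S} v$ carrying weight $x_{u_i}\prod_{\ell\in S} x_{w_\ell}$, while the entry of $M_k M_1$ is the analogous count of $u \xrightarrow{S'} w' \xrightarrow{\{i'\}} v$, and the task is to exhibit a weight-preserving bijection between these two families of paths. The essential structural input is the rate-consistency afforded by the ordering convention: because $\sigma_{j-1}$ is applied before $\sigma_j$, the position that determines the rate of each constituent sort inside $\sigma_S$ has not yet been disturbed when that sort fires, so the composite transition $u \to \sigma_S u$ genuinely carries weight $\prod_{\ell\in S} x_{u_\ell}$ evaluated at the original word. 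This lets me treat $\sigma_S$ as an ordered product of single sorts whose total weight factors cleanly, which is what makes the matching combinatorial rather than analytic.

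Conceptually the commutation is an instance of the commuting-transfer-matrix phenomenon for the integrable TASEP with class-dependent rates $x_c$ of \cite{LW}. I would organize the proof around the one-parameter family $\mathbb{M}(z) = \sum_{S\subsetneq[n]} z^{|S|}\bigl(\prod_{\ell\in S}x_{u_\ell}\bigr)(\sigma_S - \mathrm{id})$, whose $z^k$-coefficient is $M_k$, and show that the family commutes, so that extracting coefficients yields $M_jM_k = M_kM_j$. The braid relation $\sigma_i\sigma_{i+1}\sigma_i = \sigma_{i+1}\sigma_i\sigma_{i+1}$, the idempotency $\sigma_i^2 = \sigma_i$, and the far-commutation of distant sorts are the local moves that drive the bijection.

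The main obstacle is the cyclic wraparound. For a proper subset $S$ the ordering convention linearizes $\sigma_S$ by cutting the ring at a gap of $S$, but the two sorts straddling the cut, namely $\sigma_1$ (which touches position $n$) and $\sigma_n$, do not commute; so I must first verify that $\sigma_S$ is well-defined independently of which gap is chosen, and then control this non-commuting pair throughout the matching. This is precisely where the type A permutation structure is used essentially, and where the obvious analogues for other Weyl groups break down, consistent with the paper's remark that the statement does not generalize. As a fallback, should the clean commutation prove too rigid, I would instead verify directly that the explicit Ferrari--Martin matrix-product stationary measure satisfies the balance equations of each $\Theta_k$ by a telescoping cancellation in the style of the proof in Section \ref{sec_se}; the manifest $k$-independence of that measure would then give the theorem.
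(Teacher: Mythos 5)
Your first paragraph matches the paper's reduction exactly: the theorem is reduced to the commutation of the transition matrices (the paper phrases it via the one-dimensional eigenspace for the eigenvalue $n$, you via the left kernel of $M_1$; these are the same linear algebra), and this part of your argument is fine. The genuine gap is that the commutation identity itself --- which is the entire content of the paper's Section 6.1 and which the paper explicitly calls ``tricky'' --- is never actually proved in your proposal. You state that one should ``exhibit a weight-preserving bijection'' between pairs $(S,T)$ and $(S',T')$ and that the braid relation, idempotency and far-commutation ``drive the bijection'', but no bijection is constructed. Worse, the local moves you name cannot drive it in any direct way: $\sigma_i^2=\sigma_i$ changes the number of operators applied and so cannot preserve the cardinalities $|S|=l$, $|T|=k$; and the braid relation $\sigma_i\sigma_{i+1}\sigma_i=\sigma_{i+1}\sigma_i\sigma_{i+1}$ uses the multiset of positions $\{i,i+1,i\}$ on one side and $\{i+1,i,i+1\}$ on the other, while the rate of each constituent sort is $x$ indexed by the letter currently sitting at that position --- so the two sides do not carry equal weights word by word. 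The generating-function family $\sum_S z^{|S|}(\prod_{\ell\in S}x_{u_\ell})(\sigma_S-\mathrm{id})$ is a reasonable way to \emph{package} the claim $M_jM_k=M_kM_j$, but asserting that ``the family commutes'' is just restating what must be proved.

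What the paper actually does at this point is introduce a combinatorial substitute for the operator calculus: it encodes a two-step transition $(u,S,T)$ as a $2\times n$ ``diagram'' of black, white and empty sites recording which sorts fired and whether they succeeded, defines a notion of compatibility (same induced order relations on $u$, transposed column counts, and per-particle conservation of black and white sites passed, which guarantees equal weights), and then builds the required involution by three successive reductions (removing specific $2\times2$ subdiagrams) down to a class of III-reduced diagrams on which an explicit swap of $U^rL^s$ blocks to $U^sL^r$ can be written down. None of this machinery, nor any workable replacement for it, appears in your proposal; your fallback (verifying the Ferrari--Martin matrix-product measure against the balance equations of every $\Theta_k$) is likewise only named, and the paper notes that even the $k=n-1$ case only follows from multiline-queue theory in a special case. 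So the proposal identifies the correct skeleton but omits the proof of its load-bearing step.
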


In particular $\Theta_k$ has the same stationary distribution as $\Theta_1$ for each $k$, where $\Theta_1$ is the inhomogenous TASEP on a ring introduced in \cite{LW}. 

A weaker theorem has been proved by Martin and Schmidt \cite{MS}, where the underlying graph is the infinite discrete line, all $x_i = 1$, and subsets $S$ of size $k$ are instead chosen at rate $p^k$, where $p \in (0,1)$ is a parameter. Since our proof will be purely local, the theorem above implies their result.

The theorem can be reduced to proving that the transition matrices of all the $\Theta_k$ commute among themselves. To see this, note that the stationary distribution corresponds to the largest eigenvalue $n$ and that the corresponding eigenspace is one-dimensional. In the next section we prove this commutation property.

Before that, let us note that in the homogenous case, $x_1 = \dots = 1$, the chain $M_k$ in Theorem \ref{ktasep} has a purely geometric definition: randomly compose a subset $S$ of $k$ simple generators (together with the reflection in the highest root), ordering them according to some orientation of the affine Dynkin diagram (the graph with one node for each generator and one node for the highest root, where two nodes are joined by an edge if their order is $>2$ (i.e. if they do not commute)). We have tried all possible orientations of the affine Dynkin graph (on $4$ nodes) of the group $B_3$, but none is consistent with a result analogous to Theorem \ref{ktasep}.

Finally, we remark that the operators $\sigma_S$ are closely related to the multi-line queues \cite{FM}. Indeed, a special case of the case $k = n-1$ of the theorem follows directly from the theory of multi-line queues: for readers familiar with multi-line queues, this is given by comparing the $(n-1)$-TASEP with multiline queues which have an extra last row to which no new particles are added. Thus, Theorem \ref{ktasep} interpolates between the definition of the chain ($k = 1$) and its highly non-trivial description in terms of multi-line queues ($k=n-1$).

\subsection{Proof of Theorem \ref{ktasep}}

As noted earlier, it suffices to show that for any $k, l \in (0, n)$, the transition matrices $A_k$, $A_l$ of $M_k$ and $M_l$ commute. 

For words $u, v$, the $(u,v)$ entry of $A_kA_l$ is a weighted count of pairs $(S, T)$ such that $v = \sigma_T \sigma_S u$, $|S| = l$ and $|T| = k$ (weighted by the product of the rates of the two transitions $u \to \sigma_S u \to \sigma_T \sigma_S u$). Similarly, the entry $(A_lA_k)(u,v)$ counts pairs $(S', T')$ such that $ v = \sigma_{T'}\sigma_{S'} u$, $|S'| = k$, $|T'|=l$. The idea of the proof is to find a weight-preserving involution from pairs $(S,T)$ to $(S', T')$. This turns out to be tricky. Without loss of generality, we will only consider words with distinct letters -- the statement for general words follows by merging particle classes.

Given a triple $(u, S, T)$, define a $2\times n$ array $D$ (cyclic in the horizontal direction) as follows. If $i \in S$ and $u_i < u_{i-1}$, color the site $(1, i)$ in the array black ($\bullet$). Otherwise, if $u_i > u_{i-1}$, color it white ($\circ$). Similarly, if $w = \sigma_S u$ and $i \in T$, color site $(2, i)$ black if $w_i < w_{i-1}$ and white if $w_i > w_{i-1}$. We refer to a site which is not colored as {\it empty}. Any $2\times n$ array which arises in this way will be called a {\it diagram}. Let $C(D)$ denote the set of words $u$ which (together with some sets $S,T$) give $D$ as above. An example of a diagram is given in Figure \ref{fi_di1}, where its associated {\it particle trajectories} are also given. Note that these lines may be added or omitted as we please -- they are determined by the coloring of the $2\times n$ array. Formally, a trajectory is given by the three positions $p_1, p_2, p_3$ of the particle in $u$, $\sigma_S u$, $\sigma_T \sigma_S u$ respectively. We then say that the particle has {\it visited} position $p_1$ in the upper row and position $p_2$ in the lower row (we will use no such notation for $p_3$). Instead of defining an involution on triples $(u, S, T)$, we will define an involution on diagrams. Though diagrams are considered cyclic, we will most often deal with segments of diagrams.

\begin{figure}
	\begin{tikzpicture}
		\node[draw,shape=circle] at (2,2){};
		\node[circle,fill=black] at (3,2){};
		\node[circle,fill=black] at (4,2){};
		\node[circle,fill=black] at (5,2){};
		\node[circle,fill=black] at (3,1){};
		\node[draw,shape=circle] at (4,1){};
		\node[circle,fill=black] at (6,1){};
		\draw(1,2)--(1,0);
		\draw(2,2)--(5,1)--(6,0);
		\draw(3,2)--(2,1)--(3,0);
		\draw(4,2)--(3,1)--(2,0);
		\draw(5,2)--(4,1)--(4,0);
		\draw(6,2)--(6,1)--(5,0);
	\end{tikzpicture}
	\caption{A diagram $D$. The set $C(D)$ is given by all words $u_1\dots u_6$ such that $u_2$ is larger than all other letters $u_1,u_3,u_4,u_5,u_6$, and $u_4 < u_3 < u_5$. For example, the crossing down to the left implies that $u_3 > u_4$.}
	\label{fi_di1}
\end{figure}
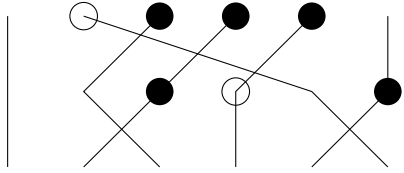

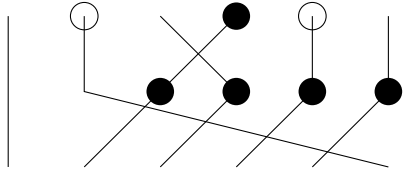
\begin{figure}
	\begin{tikzpicture}
		\node[draw,shape=circle] at (2,2){};
		\node[circle,fill=black] at (4,2){};
		\node[draw,shape=circle] at (5,2){};
		\node[circle,fill=black] at (3,1){};
		\node[circle,fill=black] at (4,1){};
		\node[circle,fill=black] at (5,1){};
		\node[circle,fill=black] at (6,1){};
		\draw(1,2)--(1,0);
		\draw(2,2)--(2,1)--(6,0);
		\draw(3,2)--(4,1)--(3,0);
		\draw(4,2)--(3,1)--(2,0);
		\draw(5,2)--(5,1)--(4,0);
		\draw(6,2)--(6,1)--(5,0);
	\end{tikzpicture}
	\caption{Another diagram.}
	\label{fi_di2}
\end{figure}

Thus we can think of a diagram as describing the trajectories of the particles $u_1, \dots u_n$. 

Say that two diagrams $D$, $D'$ of the same length $n$ are {\it compatible} if the following conditions hold.
\begin{enumerate}
	\item The order relations implied by $D$ and $D'$ are equivalent. That is, $C(D) = C(D')$. 
	\item The number of colored sites in the top row of $D$ equals the number of colored sites in the bottom row of $D'$ and conversely.
	\item The number of black sites passed by particle $i$ is the same in $D$ as in $D'$, for each $i$, and the same holds for white sites.
\end{enumerate}

As an example, the diagrams in Figues \ref{fi_di1} and \ref{fi_di2} are compatible. Not all assignments of $\{$white, black, empty$\}$ to the sites of a $2 \times n$ array gives a diagram - for example, the $2 \times n$ array with only two colored sites, one white atop a black one is not a diagram (if a particle failed to jump the first time and the particle in front of it did not move, it will fail again).

To prove the theorem, it suffices to construct an involution on diagrams pairing up compatible diagrams - condition (3) guarantees that two compatible diagrams have the same weight (for any input word $u \in S(D) = S(D')$).

We will construct the involution $\alpha$ by successively restricting the set of all diagrams to yet smaller classes of diagrams, and show how to 'lift' any definition of an involution on a smaller class to a bigger one. These restrictions all involve finding some subdiagram and replacing it by a smaller one. They are defined as follows.

\begin{enumerate}
\item
	For a diagram $D$ with the following sub-$2\times 2$-diagram in column $c$ and $c+1$,
\[
	\begin{tikzpicture}
	\node[fill,shape=circle] at (1,1){};
	\node[fill,shape=circle] at (0,0){};
	\node at (0,1) {A};
	\node at (1,0) {B};
	\end{tikzpicture}
\]
let $\rei^{(c)}(D)$ denote the diagram where the sub-$2\times 2$-diagram is replaced by the following $2\times 1$-diagram (in a single new column replacing $c$, $c+1$).
\[
	\begin{tikzpicture}
	\node at (0,1) {A};
	\node at (0,0) {B};
	\end{tikzpicture}
\]

Here, $A,B$ are placeholders for any of $\bullet$, $\circ$, or an empty space. 
\item
	\begin{enumerate}
		\item	If $D$ contains the $2\times 2$ subdiagram
	\[
		\begin{tikzpicture}
		\node[fill,shape=circle] at (1,1){};
		\node[fill,shape=circle] at (2,1){};
		\node[draw,shape=circle] at (1,0){};
		\node at (2,0){A};
		\end{tikzpicture}
	\]
	in column $c$ and $c+1$, let $\reiia^{(c)}(D)$ denote the result of replacing the $2\times 2$ diagram by the $2\times 1$ diagram

\[
	\begin{tikzpicture}
	\node[fill,shape=circle] at (1,1){};
	\node at (1,0){A};
	\end{tikzpicture}
\]

	\item
	If $D$ is a diagram containing the $2\times 2$ subdiagram

\[
	\begin{tikzpicture}
	\node at (1,1){A};
	\node[fill,shape=circle] at (1,0){};
	\node[fill,shape=circle] at (2,0){};
	\node[draw,shape=circle] at (2,1){};
	\end{tikzpicture}
\]
	in column $c$ and $c+1$, let $\reiib^{(c)}(D)$ denote the result of replacing the $2\times 2$ diagram by the $2\times 1$ diagram

\[
	\begin{tikzpicture}
	\node[fill,shape=circle] at (1,0){};
	\node at (1,1){A};
	\end{tikzpicture}
\]
	\end{enumerate}

	\item
		If $D$ contains the sub-$2\times 2$-diagram
	\[
		\begin{tikzpicture}
		\node[fill,shape=circle] at (1,1){};
		\node[draw,shape=circle] at (0,0){};
		\node at (0,1) {A};
		\node at (1,0) {B};
		\end{tikzpicture}
	\]
	in column $c$ and $c+1$, let $\reiii^{(c)}(D)$ denote the diagram with the subdiagram replaced by the following.
	\[
		\begin{tikzpicture}
		\node at (0,1) {A};
		\node at (0,0) {B};
		\end{tikzpicture}
	\]

\end{enumerate}

To find $\alpha(D)$ for a diagram $D$, we first perform three kinds of reductions. After these reductions we make a small change to the diagram, and then invert the reductions. The whole process is illustrated in Figure \ref{fi_4}. The proofs that the reductions 'work' are all similar -- we do all the details for one case in Lemma \ref{le_red2} and leave the remainder to the reader.

\newcommand{\bb}[2]{\node[circle,fill=black] at (#1,#2){};}
\newcommand{\wb}[2]{\node[draw,shape=circle] at (#1,#2){};}
\newcommand{\nb}[2]{}

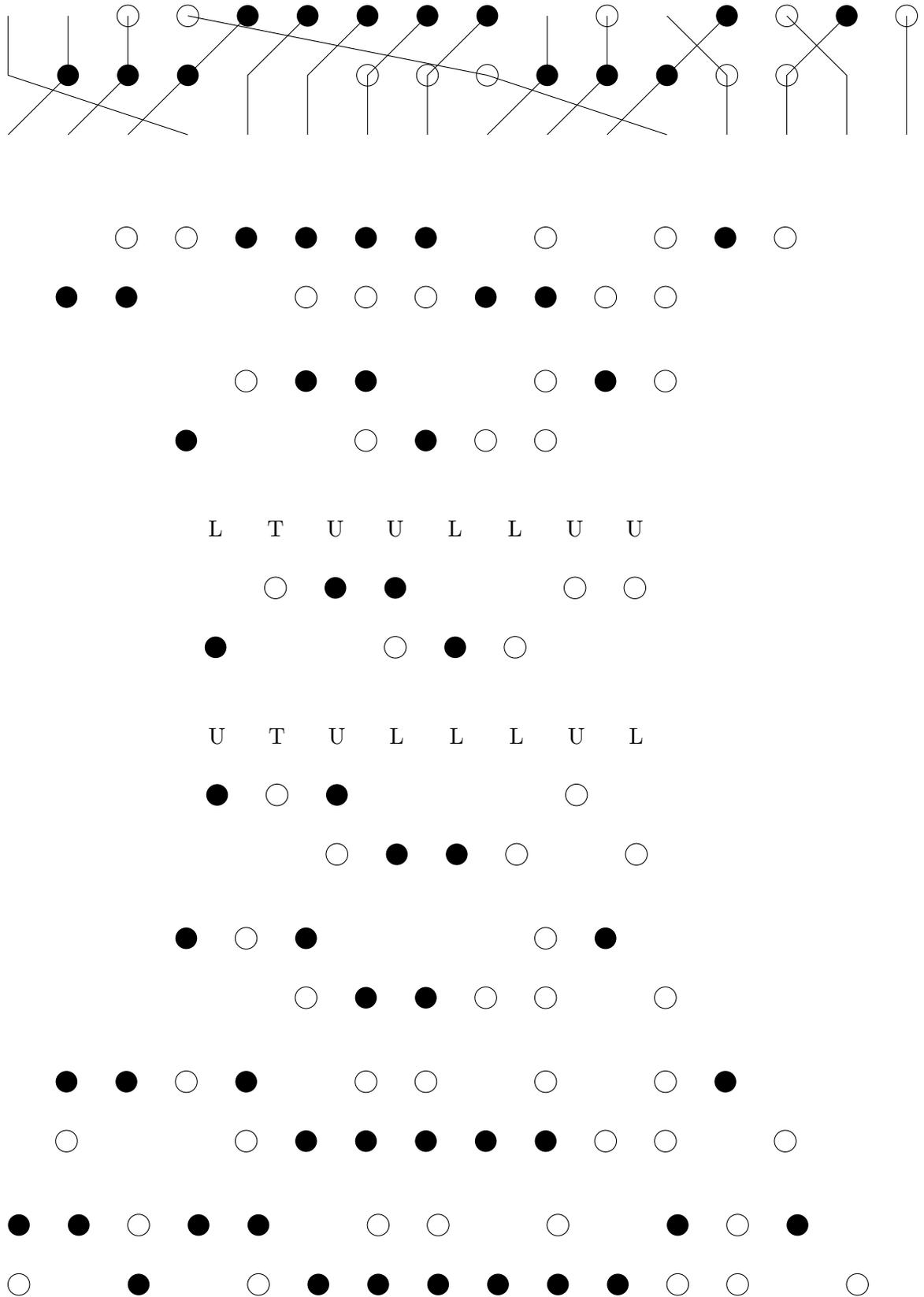
\begin{figure}
	\begin{tikzpicture}
\wb{2}{2};\wb{3}{2};\bb{4}{2};\bb{5}{2};\bb{6}{2};\bb{7}{2};\bb{8}{2};\wb{10}{2};\bb{12}{2};\wb{13}{2};\bb{14}{2};\wb{15}{2};
\bb{1}{1};\bb{2}{1};\bb{3}{1};\wb{6}{1};\wb{7}{1};\wb{8}{1};\bb{9}{1};\bb{10}{1};\bb{11}{1};\wb{12}{1};\wb{13}{1};
\draw(0,2)--(0,1)--(3,0);
\draw(1,2)--(1,1)--(0,0);
\draw(2,2)--(2,1)--(1,0);
\draw(3,2)--(8,1)--(11,0);
\draw(4,2)--(3,1)--(2,0);
\draw(5,2)--(4,1)--(4,0);
\draw(6,2)--(5,1)--(5,0);
\draw(7,2)--(6,1)--(6,0);
\draw(8,2)--(7,1)--(7,0);
\draw(9,2)--(9,1)--(8,0);
\draw(10,2)--(10,1)--(9,0);
\draw(11,2)--(12,1)--(12,0);
\draw(12,2)--(11,1)--(10,0);
\draw(13,2)--(14,1)--(14,0);
\draw(14,2)--(13,1)--(13,0);
\draw(15,2)--(15,1)--(15,0);
	\end{tikzpicture}

	\vspace{1.5cm}

	\begin{tikzpicture}
\wb{2}{2};\wb{3}{2};\bb{4}{2};\bb{5}{2};\bb{6}{2};\bb{7}{2};\wb{9}{2};\wb{11}{2};\bb{12}{2};\wb{13}{2};
\bb{1}{1};\bb{2}{1};\wb{5}{1};\wb{6}{1};\wb{7}{1};\bb{8}{1};\bb{9}{1};\wb{10}{1};\wb{11}{1};
	\end{tikzpicture}

	\vspace{1cm}

	\begin{tikzpicture}
\wb{2}{2};\bb{3}{2};\bb{4}{2};\wb{7}{2};\bb{8}{2};\wb{9}{2};
\bb{1}{1};\wb{4}{1};\bb{5}{1};\wb{6}{1};\wb{7}{1};
	\end{tikzpicture}
	
	\vspace{1cm}

	\begin{tikzpicture}
\wb{2}{2};\bb{3}{2};\bb{4}{2};\wb{7}{2};\wb{8}{2};
\bb{1}{1};\wb{4}{1};\bb{5}{1};\wb{6}{1};
\node at (1,3){L};\node at (2,3){T};\node at (3,3){U};\node at (4,3){U};\node at (5,3){L};\node at (6,3){L};\node at (7,3){U};\node at (8,3){U};
	\end{tikzpicture}
		
\vspace{1cm}

	\begin{tikzpicture}
\bb{1}{2};\wb{2}{2};\bb{3}{2};\wb{7}{2};
\wb{3}{1};\bb{4}{1};\bb{5}{1};\wb{6}{1};\wb{8}{1};
\node at (1,3){U};\node at (2,3){T};\node at (3,3){U};\node at(4,3){L};\node at(5,3){L};\node at(6,3){L};\node at (7,3){U};\node at (8,3){L};
	\end{tikzpicture}

\vspace{1cm}

	\begin{tikzpicture}
\bb{2}{2};\wb{3}{2};\bb{4}{2};\wb{8}{2};\bb{9}{2};
\wb{4}{1};\bb{5}{1};\bb{6}{1};\wb{7}{1};\wb{8}{1};\wb{10}{1};
	\end{tikzpicture}

\vspace{1cm}

	\begin{tikzpicture}
\bb{1}{2};\bb{2}{2};\wb{3}{2};\bb{4}{2};\nb{5}{2};\wb{6}{2};\wb{7}{2};\nb{8}{2};\wb{9}{2};\nb{10}{2};\wb{11}{2};\bb{12}{2};
\wb{1}{1};\nb{2}{1};\nb{3}{1};\wb{4}{1};\bb{5}{1};\bb{6}{1};\bb{7}{1};\bb{8}{1};\bb{9}{1};\wb{10}{1};\wb{11}{1};\nb{12}{1};\wb{13}{1};
	\end{tikzpicture}

\vspace{1cm}

	\begin{tikzpicture}
\bb{1}{2};\bb{2}{2};\wb{3}{2};\bb{4}{2};\bb{5}{2};\wb{7}{2};\wb{8}{2};\nb{9}{2};\wb{10}{2};\nb{11}{2};\bb{12}{2};\wb{13}{2};\bb{14}{2};
\wb{1}{1};\nb{2}{1};\bb{3}{1};\wb{5}{1};\bb{6}{1};\bb{7}{1};\bb{8}{1};\bb{9}{1};\bb{10}{1};\bb{11}{1};\wb{12}{1};\wb{13}{1};\wb{15}{1};
	\end{tikzpicture}

	\caption{Example of computing $\alpha(D)$ for the diagram $D$ in the top. The three diagrams after the first one correspond to I-, II- and III-reductions. The next step consists of computing $\alpha(D')$ for the III-reduced diagram $D'$. The remaining steps consists of inverting the III-, II- and I-reductions. So the last diagram is the image under the involution of the first. To reduce clutter we have drawn the trajectories only for the first diagram.}
	\label{fi_4}
\end{figure}

\newcommand{\vac}{\circ}
\newcommand{\occ}{\bullet}

\begin{lemma}
Let $D, D'$ be two diagrams and $c$ a column for which $\rei^{(c)}$ is applicable.
If $\rei^{(c)}(D)$ and $\rei^{(c)}(D')$ are compatible, then $D$ and $D'$ are compatible.
\end{lemma}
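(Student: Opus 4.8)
The plan is to show that each of the three defining conditions of compatibility descends through the reduction. The essential point is that $\rei^{(c)}$ is \emph{local}: it merely replaces the $2\times 2$ block in columns $c,c+1$ (black at the top-right and bottom-left, placeholders $A,B$ at the top-left and bottom-right) by the single column carrying $A$ on top and $B$ below, leaving the rest of the diagram untouched. I would first recast this in terms of trajectories. The black site at $(1,c+1)$ is the swap that moves the letter $u_{c+1}$ left into position $c$, and the black site at $(2,c)$ is the swap that then moves that same letter left again into position $c-1$; thus both deleted black sites lie on the single trajectory of the particle that starts at column $c+1$. Collapsing the two columns therefore amounts to deleting exactly this particle and re-splicing its two neighbours, which gives a canonical bijection between the $n-1$ surviving particles of $D$ and the particles of $\rei^{(c)}(D)$. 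The same description applies verbatim to $D'$, since $\rei^{(c)}$ is applicable to both.

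Condition (2) is then immediate bookkeeping: the reduction erases exactly one black site from the top row (at $(1,c+1)$) and one from the bottom row (at $(2,c)$), so each colored-site count of $D$ exceeds the corresponding count of $\rei^{(c)}(D)$ by one, and likewise for $D'$. Subtracting, the crosswise equality of counts for the reduced pair forces it for $(D,D')$. Condition (3) follows from the trajectory bijection: both removed black sites belong to the single deleted trajectory, and the retained sites $A,B$ stay attached to the same neighbouring particles after the collapse, so no surviving particle gains or loses any black or white site. Hence the per-particle tallies of $D$ are literally identified with those of $\rei^{(c)}(D)$ (and likewise for $D'$), and matching tallies for the reduced pair lift directly to $(D,D')$.

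The substance of the proof is condition (1), $C(D)=C(D')$. Here I would introduce the letter-deletion map sending $u\in C(D)$ to the word $\bar u$ obtained by deleting $u_{c+1}$, and argue that it carries $C(D)$ bijectively onto $C(\rei^{(c)}(D))$. The two black sites contribute the fixed inequalities $u_{c+1}<u_c$ and $u_{c+1}<w_{c-1}$ (where $w=\sigma_S u$), and these, together with the relations read off from $\rei^{(c)}(D)$ on the remaining letters, should be shown to be exactly the relations cutting out $C(D)$. Since the $\rei$-pattern, the deleted letter, and the two extra inequalities are identical for $D$ and $D'$, the hypothesis $C(\rei^{(c)}(D))=C(\rei^{(c)}(D'))$ then yields $C(D)=C(D')$ upon reinstating the deleted letter under the same constraints.

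The main obstacle is precisely this last condition: one must verify that the deletion map is order-faithful, i.e.\ that collapsing columns $c,c+1$ creates no new relation among the surviving letters and destroys none beyond those carried by the deleted particle, and that the placeholders $A,B$ (hence the relations they encode) genuinely agree for $D$ and $D'$ rather than merely producing logically equivalent systems. Once this is settled, conditions (1)--(3) all transfer by applying one and the same local rule to $D$ and to $D'$, and the lemma follows.
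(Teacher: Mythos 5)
Your plan has the right skeleton, and it is worth noting that the paper itself never writes out a proof of this lemma: it proves only the II-reduction analogue (Lemma \ref{le_red2}) in detail and leaves the I- and III-reductions to the reader, so the benchmark is whether your sketch could be completed along the lines of that model proof. Your treatment of conditions (2) and (3) is essentially complete: the two erased black sites are indeed the two sites visited by the particle starting in column $c+1$ (whose trajectory is $c+1\to c\to c-1$), one erased site sits in each row, and every surviving particle visits the same (suitably relabelled) sites with the same colours before and after the collapse, so those counts transfer by simple arithmetic.

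The gap is exactly where you locate it, and it is not a formality. First, the two inequalities you attach to the deleted letter are not in general $u_{c+1}<u_c$ and $u_{c+1}<w_{c-1}$: a black site at $(1,c+1)$ asserts that $u_{c+1}$ is smaller than whatever letter occupies position $c$ at the moment $\sigma_{c+1}$ is applied, and if the placeholder $A$ is black that occupant is a letter displaced from further left, not $u_c$ (compare Figure \ref{fi_di1}, where the black site in column $4$ of the top row records $u_4<u_2$, not $u_4<u_3$); the analogous caveat applies to the site $(2,c)$ when $(2,c-1)$ is black. Second, and more seriously, your assertion that ``the two extra inequalities are identical for $D$ and $D'$'' is the entire content of condition (1) and does not come for free: the letters $X$, $Y$ against which $u_{c+1}$ is compared are determined by the portions of $D$ and $D'$ \emph{outside} the collapsed columns, and those portions differ. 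What must actually be shown is that $X$ and $Y$ are computable from $\rei^{(c)}(D)$ alone (as the occupants of the relevant positions at the relevant intermediate times of the reduced dynamics), and that compatibility of the reduced diagrams then forces the admissible range $\{u_{c+1}:u_{c+1}<\min(X,Y)\}$ to coincide for $D$ and $D'$. Until that is carried out --- by the same column-by-column particle tracking the paper performs in its proof of Lemma \ref{le_red2} --- the proposal is a plan rather than a proof, as your own closing paragraph concedes.
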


Call a diagram $D$ for which there is no $c$ such that $\rei^{(c)}$ can be applied a {\it I-reduced diagram}. The lemma shows that to construct our required involution it suffices to construct it on the set of I-reduced diagrams. An example of our involution $\alpha$ to be defined on this restricted set is given by removing the forbidden subdiagram from Figures \ref{fi_di1} and \ref{fi_di2}.

It is easy to see that for a I-reduced diagram, two sites in the same column cannot both be black, and if there is a column colored black in the lower row and white in the upper row, then the site in the lower row to the left of the column is black.

\begin{lemma}
\label{le_red2}
Suppose $D$ and $D'$ are I-reduced diagrams, $c$ a column. Then
\begin{itemize}
	\item $D$ and $D'$ are compatible whenever $\reiia^{(c)}(D)$ and $\reiia^{(c)}(D')$ are both defined and compatible.
	\item $D$ and $D'$ are compatible whenever $\reiia^{(c)}(D)$ and $\reiib^{(c)}(D')$ are both defined and compatible.
	\item $D$ and $D'$ are compatible whenever $\reiib^{(c)}(D)$ and $\reiib^{(c)}(D')$ are both defined and compatible.
\end{itemize}
\end{lemma}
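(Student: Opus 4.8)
The plan is to follow the template of the preceding I\nobreakdash-reduction lemma: assuming the reduced diagrams are compatible, verify the three defining conditions of compatibility for $D$ and $D'$ directly. The starting observation is that each of $\reiia^{(c)}$ and $\reiib^{(c)}$ is a purely local surgery on the $2\times 2$ block in columns $c$ and $c+1$; outside these two columns the coloring, and hence every particle trajectory, is left untouched. So everything reduces to understanding the local block and how collapsing it to a single column deletes one particle and reroutes the trajectories passing through columns $c,c+1$. First I would record the trajectory meaning of the blocks: in $\reiia$ the two top blacks are a single particle sorted rightward past both columns in the first round, with the white below recording a failed swap in the second round, while $\reiib$ is the same picture with the two rows interchanged, the double jump now occurring in the second round. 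For each reduction I would pin down the deleted particle and the relabeling of the survivors explicitly, exactly as was done for $\rei$.

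Conditions (2) and (3) are then bookkeeping that parallels the I\nobreakdash-reduction lemma. A direct inspection shows that both $\reiia^{(c)}$ and $\reiib^{(c)}$ lower the number of colored sites in the top row by one and in the bottom row by one: $\reiia$ deletes a top black and a bottom white, while $\reiib$ deletes a bottom black and a top white. Because these changes agree for the two reductions, the two cross-equalities of condition (2) for the reduced diagrams transfer verbatim to $D$ and $D'$, and likewise in the two non-mixed cases. For condition (3) one checks that reinsertion returns one black to the doubly-jumping particle (in the top row for $\reiia$, the bottom row for $\reiib$) and one white to a fixed neighbouring particle; since condition (3) counts blacks and whites irrespective of row, equality of all per-particle counts lifts from the reduced diagrams, provided one confirms that the reinserted white is attributed to corresponding particles in the $\reiia$- and $\reiib$-cases. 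This is the one place where the row-swap relating the two reductions must be reconciled.

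The delicate step, which I expect to be the main obstacle, is condition (1): that $C(D)=C(D')$. Here one must show that the order relations carried by the full $2\times 2$ block are recoverable from the merged column together with the name of the reduction, by a rule that produces \emph{identical} constraints for the $\reiia$-block in $D$ and the $\reiib$-block in $D'$ — even though the merged column is colored oppositely in the two cases (black over $A$ for $\reiia$, $A$ over black for $\reiib$). Concretely, the reinserted doubly-jumping particle must acquire exactly the same inequalities relative to its neighbours under both un-reductions, so that $C(\reiia^{(c)}(D))=C(\reiib^{(c)}(D'))$ forces $C(D)=C(D')$. The hypothesis that $D$ and $D'$ are I\nobreakdash-reduced is what excludes the degenerate local patterns that would otherwise spoil this correspondence (for instance a column with two black sites, which I\nobreakdash-reducedness forbids). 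Once this order-equivalence of the $\reiia$- and $\reiib$-blocks is established — the genuinely new input beyond the I\nobreakdash-reduction lemma — all three asserted cases follow, the non-mixed ones being the straightforward analogues of the case already carried out in detail, and the mixed one being precisely the local move by which the involution $\alpha$ will be allowed to exchange an $\reiia$-pattern for a $\reiib$-pattern.
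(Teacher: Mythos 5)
Your overall strategy coincides with the paper's: treat $\reiia^{(c)}$ and $\reiib^{(c)}$ as local surgeries on columns $c,c+1$, lift each compatibility condition from the reduced diagrams, and isolate the mixed $\reiia$/$\reiib$ case as the essential one. Your bookkeeping for condition (2) is correct (each reduction removes one colored site from each row, so the cross-equalities transfer), and your appeal to I\nobreakdash-reducedness to exclude degenerate local patterns is exactly the role it plays in the paper, where it is used to conclude that the placeholders $A$ and $A'$ are not black.

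The gap is that the proposal stops where the actual work begins. You explicitly defer the verification that the $\reiia$-block of $D$ and the $\reiib$-block of $D'$ impose identical order relations on the input word, calling it ``the genuinely new input'' and asserting that once it is established the lemma follows --- but that claim \emph{is} the lemma, and no argument for it is given. The paper discharges it by tracking particles: a particle passing column $c$ in $\reiia^{(c)}(D)$ must do so in the upper row and hence passes column $c$ in the lower row of $\reiib^{(c)}(D')$, the reinserted column preserves this, the reinserted particle occupies the unique remaining position, and the order relations contributed by the new white and new black sites are then compared directly. Some version of that case check must appear in your proof. Two further points. First, your per-particle accounting for condition (3) (``one black to the doubly-jumping particle, one white to a fixed neighbouring particle'') is suspect: under the paper's definition a particle passes only the site at its position in $u$ (upper row) and at its position in $\sigma_S u$ (lower row), and on that reading both reinserted sites of the $\reiia$-block are passed by the single particle that the reduction deletes; since condition (3) is per-particle, this attribution has to be pinned down, not assumed. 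Second, the paper's proof spends most of its effort showing that $D$ and $D'$ send any $u\in C(D)=C(D')$ to the same output word; this is not listed among the three compatibility conditions but is indispensable for the involution to prove that the transition matrices commute, and your proposal does not address it at all.
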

\begin{proof}
	We consider the second case. Denote the placeholders in $D$ and $D'$ by $A$ and $A'$. Note that since the diagrams are I-reduced, $A$ (and $A'$) is not colored black.
	First we need to show that the output of $D$ and $D'$ are the same (for any input word $u \in C(D) = C(D')$), that is, we need to show that each particle ends up in the same place in both $D$ and $D'$. We know this is true for $\reiia^{(c)}(D)$ and $\reiib^{(c)}(D')$ so we only need to consider particles that pass through column $c$ in $D$. This is easy to check -- if the particle passes column $c$ in $\reiia^{(c)}(D)$ it must do so in the upper row, and then it must pass column $c$ in the lower row in $\reiib^{(c)}(D')$. The column added when constructing $D$ respectively $D'$ preserves this property. And if the particle does not pass column $c$ in $\reiia^{(c)}(D)$ and $\reiib^{(c)}(D')$, the same holds in $D$ and $D'$. Finally it's easy to check that the particle starting in the 'new' column in $D$ and $D'$ end up in the same place (indeed, it's the only place left).
	Next, we need to show that $C(D) = C(D')$. This amounts to checking what new relations on the input word $u$ the colors of the new sites in $D$ and $D'$ give us. This can be checked directly and indivdually for the new white site in $D$ and $D'$ and the new black site in $D$ and $D'$.
\end{proof}

Call a I-reduced diagram for which no $\reiib^{(c)}$ nor $\reiia^{(c)}$ is defined a II-reduced diagram. We now make the final restriction.

\begin{lemma}
Suppose $D, D'$ are II-reduced diagrams, and $c$ is a column for which $\reiii^{(c)}(D)$ and $\reiii^{(c)}(D')$ are both defined. Then, if the latter two are compatible, then so are $D$ and $D'$.
\end{lemma}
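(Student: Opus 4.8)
The plan is to follow the template of Lemma \ref{le_red2} almost verbatim. Since $\reiii^{(c)}$ merges columns $c$ and $c+1$ into a single column, it suffices to check that each of the three defining conditions of compatibility is \emph{lifted} when this column is reinstated, using only local information at columns $c-1, c, c+1$. Write $A, B$ for the placeholders in $D$ and $A', B'$ for those in $D'$; since $D, D'$ are II-reduced, these placeholders are constrained so that none of the earlier reductions $\rei^{(c)}, \reiia^{(c)}, \reiib^{(c)}$ applies, and one records exactly which local colorings of columns $c-1, c, c+1$ are therefore admissible. Throughout, fix an input word $u \in C(\reiii^{(c)}(D)) = C(\reiii^{(c)}(D'))$.

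First I would establish that the outputs agree, i.e. that $\sigma_T\sigma_S u$ is the same word whether computed from $D$ or from $D'$. This is already known for the reduced diagrams, so only the particles crossing columns $c, c+1$ need attention. In the III-pattern the black site at $(1,c+1)$ forces a swap in the first step, so the particle arriving at the top of column $c+1$ (the smaller letter, since the black site imposes $u_{c+1} < u_c$) moves left into column $c$, while the white site at $(2,c)$ records that the second step leaves that column alone. One checks that the reinstated column routes this particle, and the particle occupying the reinstated column, to the same final positions in $D$ and $D'$ --- the latter because only one slot remains, exactly as in the closing remark of the proof of Lemma \ref{le_red2}.

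Next, conditions (2) and (3) are bookkeeping. The reduction $\reiii^{(c)}$ removes exactly one black site from the top row and one white site from the bottom row of each of $D$ and $D'$. Hence reinstating the column increments the top-row colored count and the bottom-row colored count of each diagram by one apiece; since the corresponding equalities hold for $\reiii^{(c)}(D)$ and $\reiii^{(c)}(D')$, they continue to hold for $D$ and $D'$, giving condition (2). For condition (3) I would identify which particle passes the reinstated black site and which passes the reinstated white site, and check that the same per-particle increments to the black- and white-counts occur in both diagrams; this follows from the output-agreement step, which pins down the trajectory through the merged column.

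The main obstacle is condition (1), namely $C(D) = C(D')$. Reinstating the column introduces new order relations: the black site at $(1,c+1)$ imposes $u_{c+1} < u_c$, and the white site at $(2,c)$ imposes a relation on $\sigma_S u$ (equivalently, a relation among the original letters once the first-step swap is taken into account). I would show that these relations are \emph{identical} for $D$ and $D'$, since they are read off from the same local pattern, and that combined with $C(\reiii^{(c)}(D)) = C(\reiii^{(c)}(D'))$ they yield $C(D) = C(D')$. The delicate point, exactly as in Lemma \ref{le_red2}, is to verify that no spurious relation is created or destroyed when the column is collapsed, and that $\reiii^{(c)}$ is uniquely invertible; this is precisely where II-reducedness is used, to constrain the admissible placeholders and colorings so that the reinstated column is forced.
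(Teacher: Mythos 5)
Your overall strategy is the right one --- the paper itself gives no proof of this lemma, saying only that the argument is similar to that of Lemma \ref{le_red2}, and your plan (lift output-agreement, then conditions (2), (3), (1) in turn, using II-reducedness to pin down the placeholders, in particular $A\neq\bullet$ and $B\neq\bullet$) is exactly that template. The output-agreement step and the counting for condition (2) are fine, as is the observation that the single reinstated particle is the one that picks up both the new black site at $(1,c+1)$ and the new white site at $(2,c)$ in each of $D$ and $D'$, which settles condition (3).

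The gap is in your treatment of condition (1). You assert that the new order relations ``are identical for $D$ and $D'$, since they are read off from the same local pattern.'' That is not true as stated. The black site at $(1,c+1)$ does impose the local relation $u_{c+1}<u_c$ (II-reducedness, via $A\neq\bullet$, is what guarantees that the occupant of column $c$ at that moment is still $u_c$). But the white site at $(2,c)$ compares $u_{c+1}$ with the letter occupying column $c-1$ during the second sweep, and that letter is \emph{not} determined by columns $c,c+1$: it is whichever letter has been carried rightward along the run of black sites ending just to the left of column $c$, so it depends on the whole pattern of black sites to the left. Two compatible diagrams need not agree on their intermediate configurations --- Figures \ref{fi_di1} and \ref{fi_di2} are compatible yet route different letters ($u_3$ versus $u_2$) to position $2$ after the first sweep --- so this reference letter can a priori be a different particle in $D$ and in $D'$, in which case the two lower bounds on $u_{c+1}$ are genuinely different relations and $C(D)=C(D')$ does not follow. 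A correct proof must show, using the compatibility of $\reiii^{(c)}(D)$ and $\reiii^{(c)}(D')$ together with II-reducedness, that this reference letter is the same particle in both diagrams, or that the resulting constraint sets on $u_{c+1}$ coincide anyway. A second, smaller omission of the same kind: when $B=\circ$, the relation it encodes \emph{changes} upon reinstating the column --- in $\reiii^{(c)}(D)$ it compares the occupant of the merged column with the occupant of column $c-1$ in the second sweep, while in $D$ it compares the occupant of column $c+1$ with $u_{c+1}$ --- so it is not enough to say that relations are only ``introduced''; you must also check that this modified relation, together with the two new ones, cuts out the same set of words for $D$ as for $D'$.
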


If $D$ is a II-reduced diagram for which no $\reiii^{(c)}$ is defined we call $D$ a III-reduced diagram. The variety of III-reduced diagrams is sufficiently narrow to be analyzed directly. Note that in any III-reduced diagram, if a particle passes two colored sites, then those are necessarily white. Suppose $D$ is a III-reduced diagram. Above the starting column of each particle (see Figure \ref{fi_4}), write either $U, L$ or $T$, depending on whether the particle passes (T)wo sites colored white or a unique colored site in the (U)pper row or a unique colored site in the (L)ower row. To define $\alpha(D)$, for each maximal word $U^r D^s$ strictly between two words of the type $T$ or $LU$, change the behavior of the particles corresponding to $U^rL^s$ so that it will read $U^s L^r$ instead (see the example in Figure \ref{fi_4}). We define the resulting diagram to be $\alpha(D)$.

It is readily checked that the map is an involution, and that it pairs up compatible diagrams. Thus this is true of the extended involution on all diagrams, too. This finishes the proof.

\section{Questions, remarks}
\label{sec_que}

\subsection{}
 Is there an easily described conjugation matrix $U_{i,J}$ for general links $(i, J)$ in type C (or B or D)?

As an example, in the notation of Section \ref{sec_C}, consider the link $(i, J, J') = (2, \{3,4\}, \{2,3,4\})$ when $n=4$. Convert states in $\Theta_J$ (and $\Theta_{J'}$) to words $w_1w_2w_3w_4$. If the $i$th column is empty, let $w_1 = 3$ ($2$), otherwise let $w_i = +i$ if there is a particle of class $i$ in the upper line, and $w_i = -i$ if there is a particle of class $i$ in the lower line. We order the $48$ states of $\Theta_J$ lexicographically: $\bar{2}\bar{1}33 < \bar{2}133 < \bar{2}3\bar{1}3 < \dots < 3321$. Similarly, the $8$ states of $\Theta_{J'}$ are ordered (from left to right) as $\bar{1}222 < 1222 < 2\bar{1}22 < 2122 < 22\bar{1}2 < 2212 < 222\bar{1} < 2221$. Indexing rows and columns this way, the transpose $U$ of the following matrix satisfies $M_J U = U M_{J'}$. 

$
\left(
\begin{array}{c}
1 0 1 0 2 1 0 3 0 1 0 0 0 4 0 2 0 0 2 0 2 0 4 2 1 0 2 1 0 2 0 0 0 1 0 0 1 0 2 1 4 2 0 1 0 0 2 1 \\
1 0 1 0 1 0 0 3 0 1 1 1 0 4 0 2 2 2 2 0 2 0 2 0 1 0 1 0 0 2 1 1 0 1 1 1 1 0 1 0 3 1 1 2 1 1 1 0 \\
1 0 1 1 2 1 0 3 0 0 0 0 0 4 0 0 0 0 2 0 2 2 4 2 1 1 2 1 0 1 0 0 0 0 0 0 1 1 2 1 4 3 0 0 2 1 0 0 \\
1 0 1 0 0 0 0 3 0 1 2 1 0 4 0 2 4 2 2 0 2 0 0 0 1 0 0 0 0 2 2 1 0 1 2 1 1 0 0 0 2 2 2 1 2 1 0 0 \\
1 1 1 1 2 1 0 2 0 0 0 0 0 2 0 0 0 0 2 2 2 2 4 2 1 2 2 1 0 0 0 0 1 1 2 1 0 0 0 0 4 3 0 0 2 1 0 0 \\
1 0 0 0 0 0 0 3 1 1 2 1 0 4 2 2 4 2 2 0 0 0 0 0 0 1 0 0 1 1 2 1 1 1 2 1 0 0 0 0 4 3 0 0 2 1 0 0 \\
1 2 1 1 2 1 0 1 0 0 0 0 1 3 1 1 2 1 1 1 1 1 2 1 1 2 2 1 0 0 0 0 1 1 2 1 0 0 0 0 4 3 0 0 2 1 0 0 \\
0 1 0 0 0 0 1 2 1 1 2 1 2 4 2 2 4 2 0 0 0 0 0 0 1 2 2 1 0 0 0 0 1 1 2 1 0 0 0 0 4 3 0 0 2 1 0 0 \\
\end{array}
\right)
$

(This is a $8\times 48$ matrix all of whose entries are in $\{0,1,2,3,4\}$.)
Is there a combinatorial rule (along the lines of the queueing process of Section \ref{sec_fi}) which produces the column corresponding to any given state in $\Theta_{J'}$?

\subsection{} Is it possible to carry the explicit description of the stationary distribution $\Theta_J$ further than is done in Section \ref{sec_se}, say for $|J| = n - 2$?

\subsection{} Can the matrix $U$ in Section \ref{sec_fi} be defined without reference to the permutation representation of the group, ie. using only the realization of the group as a reflection group?

\subsection{} Small examples indicate that a similar queuing process exists for groups of type B and D. Is it easier to extend the analysis in Sections \ref{sec_fi} and \ref{sec_se} for these groups?

\subsection{} Can the $k$-TASEP be extended to general Weyl groups?

\end{document}